\documentclass[12pt,a4paper]{amsart}
\usepackage{amsmath}
\usepackage{drpack}
\usepackage[english]{babel}
\usepackage{verbatim}
\usepackage[shortlabels]{enumitem}
\usepackage{color}
\usepackage{tikz-cd}

\newtheoremstyle{mio}%
{}{} 
{\itshape}{} 
{\bfseries}{.}{ } 
{#1 #2\thmnote{~\mdseries(#3)}} 
\theoremstyle{mio}
\newtheorem{teor}{Theorem}[section]
\newtheorem{cor}[teor]{Corollary}
\newtheorem{prop}[teor]{Proposition}
\newtheorem{lemma}[teor]{Lemma}
\newtheorem{defin}[teor]{Definition}

\newtheoremstyle{definition2}%
{}{} 
{}{} 
{\bfseries}{.}{ } 
{#1 #2\thmnote{\mdseries~ #3}} 
\theoremstyle{definition}
\newtheorem{ex}[teor]{Example}
\newtheorem{oss}[teor]{Remark}
\newtheorem*{quest}{Question}

\title{Boundness in almost Dedekind domains}
\author{Dario Spirito}
\date{\today}
\address{Dipartimento di Scienze Matematiche, Informatiche e Fisiche, Universit\`a degli Studi di Udine, Udine, Italy}
\email{dario.spirito@uniud.it}
\subjclass[2010]{13F05; 13A15.}
\keywords{Almost Dedekind domains; critical maximal ideals; SP-domains; free abelian group; invertible ideals}

\newcommand{\inscrit}{\mathrm{Crit}}
\newcommand{\Jac}{\mathrm{Jac}}

\newcommand{\V}{\mathcal{V}}
\newcommand{\D}{\mathcal{D}}
\newcommand{\mmax}{\mathcal{M}}
\newcommand{\Inv}{\mathrm{Inv}}
\newcommand{\Div}{\mathrm{Div}}
\newcommand{\critx}[1]{\phantom{}^{#1}\inscrit}
\newcommand{\bcrit}{\critx{\omega}}

\begin{document}

\begin{abstract}
We study different form of boundness for ideals of almost Dedekind domains, generalizing the notions of critical ideals, radical factorization, and SP-domains. We show that every almost Dedekind domain has at least one noncritical maximal ideals and, indeed, the set of noncritical maximal ideals is dense in the maximal space, with respect to the constructible topology; as a consequence, we show that every almost Dedekind domain is SP-scattered, and in particular that the group $\Inv(D)$ of invertible ideals of an almost Dedekind domain $D$ is always free. If $D$ is an almost Dedekind domain with nonzero Jacobson radical, we also show that there is at least one element whose ideal function is bounded.
\end{abstract}

\maketitle

\section{Introduction}
Dedekind domains are one of the most basic rings in commutative algebra. One of their many characterization is about factorization of ideals: an integral domain is Dedekind if and only if every ideal can be written (uniquely) as a finite product of prime ideals. One generalization of this property is requiring a factorization in \emph{radical} ideals: when every ideal has a radical factorization, the domain is said to be an \emph{SP-domain}. SP-domain are not very far from being Dedekind domains: indeed, they are \emph{almost Dedekind domains}, meaning that the localization at every maximal ideal is a discrete valuation ring (equivalently, a domain is almost Dedekind if it is locally Dedekind) \cite[Theorem 2.4]{vaughan-SP}. This property implies several interesting consequences: for example, the group of invertible ideals of an SP-domain $D$ is isomorphic to the group of continuous functions of compact support from the maximal space of $D$ (endowed with the constructible topology) to $\insZ$, and in particular it is free (see \cite{HK-Olb-Re} and \cite{SP-scattered}).

However, not every almost Dedekind domain enjoys radical factorization; one characterization is that an almost Dedekind domain is an SP-domain if and only if it has no \emph{critical maximal ideals}, i.e., if every maximal ideal contains a finitely generated radical ideal. In order to study the case of almost Dedekind domains that are not SP-domains, in \cite{SP-scattered} the author introduced a chain $\{\inscrit_\alpha(D)\}_\alpha$ of subsets of the maximal space $\mmax$ and a chain of overrings of the base ring, constructed by applying the definition of critical ideals recursively in a manner similar to how the chain of derived sets of a topological space is defined. In this way, it is possible to recover some properties of SP-domains in a wider class of domains (the \emph{SP-scattered domains}), defined as those almost Dedekind domains such that $\inscrit_\alpha(D)$ is empty for some ordinal number $\alpha$; for example, in this case the group of invertible ideals of $D$ is free, and can be written as a direct sum of groups of continuous functions.

However, the methods used in \cite{SP-scattered} were not enough to cover all almost Dedekind domains, because they do not give any result when the set $\inscrit(D)$ of critical ideals of $D$ coincides with the maximal space of $D$. Yet, no example of this phenomenon was known; for example, an open question in \cite{hasenauer-normsets} was if there exists an almost Dedekind domain that is \emph{completely unbounded}, i.e., where the ideal function associated to every element is unbounded (see Section \ref{sect:idfunct} for the definition of the ideal function associated to an ideal).

In this paper, we show that every almost Dedekind domains has a maximal ideal that is not critical (Theorem \ref{teor:exist-noncrit}) and, as a consequence, that every almost Dedekind domain is SP-scattered (Theorem \ref{teor:SP-scat}): in particular, this shows that the results proved in \cite{SP-scattered} for SP-scattered domains actually hold in every almost Dedekind domain. 

We start in Section \ref{sect:bounded} by generalizing the notion of critical ideals to \emph{$n$-critical ideals} (for $n\inN$) and to \emph{$\omega$-bounded ideals}: the former are the maximal ideals that do not contain any finitely generated ideal whose associated ideal function is bounded by $n$, while the latter are those maximal ideal that do not contain any finitely generated ideal whose associated ideal function is bounded. Note that $1$-critical ideals are exactly critical ideals. We show that these notions can be used to define a notion analogous to the one of SP-domains, and to construct a theory that is analogous to the one developed in \cite{SP-scattered} for critical ideals.

In Section \ref{sect:anti-SP}, we show that every almost Dedekind domain has a non-critical maximal ideal (Theorem \ref{teor:exist-noncrit}) by analyzing the sets $\nu_I^{-1}((n,+\infty))$ when $I$ is a finitely generated ideal; in particular, we prove that these sets are always open (i.e., that $\nu_I$ is semicontinuous; Proposition \ref{prop:semicont}). For principal ideals, we show that if the Jacobson radical of $D$ is nonzero, then $D$ cannot be completely unbounded (Proposition \ref{prop:compl-unbound-jac}). In Section \ref{sect:SP-scattered}, we complete these results by showing that every almost Dedekind domain is SP-scattered (Theorem \ref{teor:SP-scat}) and that the set $\mmax\setminus\inscrit(D)$ is not only nonempty, but also dense in $\mmax$ (Theorem \ref{teor:dense}).

\section{Preliminaries}
\subsection{Domains}
Let $D$ be an integral domain with quotient field $K$. A \emph{fractional ideal} of $D$ is a submodule $I$ of $K$ such that $dI\subseteq D$ for some $d\in D$, $d\neq 0$; an \emph{integral ideal} is a fractional ideal contained in $D$ (i.e., an ideal of $D$), and we say that $I$ is \emph{proper} if $I\subsetneq D$.

A fractional ideal is \emph{invertible} if there is a $J$ such that $IJ=D$; in this case, $J=(D:I):=\{x\in K\mid xI\subseteq D\}$. If $I$ is invertible, we set $I^{-1}:=(D:I)$. Every invertible fractional ideal is finitely generated. The set of invertible ideals is a group under the product of ideals, denoted by $\Inv(D)$. If $T$ is a ring between $D$ and $K$ (i.e., an \emph{overring} of $D$), then the extension map $I\mapsto IT$ induces a group homomorphism $\Inv(D)\longrightarrow\Inv(T)$.

If $I$ is a fractional ideal of $D$, the \emph{$v$-closure} of $I$ is $I^v:=(D:(D:I))$; equivalently, $I^v$ is the intersection of all principal fractional ideals containing $I$. The ideal $I$ is \emph{divisorial} if $I=I^v:=(D:(D:I))$. Every invertible fractional ideal is divisorial.

An \emph{almost Dedekind domain} is a domain $D$ such that every localization $D_M$ is a discrete valuation ring; in particular, an almost Dedekind domain is one-dimensional, integrally closed, and Pr\"ufer, and thus every finitely generated ideal of an almost Dedekind domain is invertible. If $M$ is a maximal ideal of the almost Dedekind domain $D$, we say that $M$ is \emph{critical} if $M$ does not contain any nonzero radical finitely generated ideals. We denote by $\inscrit(D)$ the set of critical maximal ideals of $D$.

An \emph{SP-domain} is a domain such that every proper ideal is a product of radical ideals; an SP-domain is always an almost Dedekind domain, while an almost Dedekind domain is an SP-domain if and only if it has no critical maximal ideals. See \cite[Theorem 2.1]{olberding-factoring-SP} and \cite[Theorem 3.1.2]{fontana_factoring} for other characterizations of SP-domains.

\subsection{The inverse and the constructible topology}
Let $R$ be a unitary commutative ring. The \emph{inverse topology} on the spectrum $\Spec(R)$ of $R$ is the topology whose subbasic open sets are the sets $\V(I):=\{P\in\Spec(R)\mid I\subseteq P\}$, as $I$ ranges among the finitely generated ideals of $R$. The \emph{constructible topology} is the topology on $\Spec(R)$ generated by both the $\V(I)$ and the $\D(I):=\{P\in\Spec(D)\mid I\nsubseteq P\}$, as $I$ ranges among the finitely generated ideals of $D$. With both these topologies, $\Spec(R)$ is a compact space (it is also a \emph{spectral space}, meaning that they are homeomorphic to the prime spectrum fo a ring endowed with the Zariski topology); the constructible topology is also Hausdorff. See \cite[Chapter 1]{spectralspaces-libro} for a thorough examination of these topologies.

On the maximal spectrum $\Max(R)$ of $R$, the inverse and the constructible topology coincide \cite[Corollary 4.4.9]{spectralspaces-libro}; we denote this topological space by $\mmax$, or $\mmax_R$ if we need to underline the ring involved. Then, $\mmax$ is completely regular, totally disconnected and has a basis of clopen subsets \cite[Corollary 4.4.9]{spectralspaces-libro}. If $R$ is one-dimensional (for example, if $R$ is an almost Dedekind domain), then $\mmax$ is compact if and only if the Jacobson radical $\Jac(R)$ of $R$ is nonzero, where $\Jac(R)$ is the intersection of all the maximal ideals of $R$. In this case, the inverse and the constructible topology also agree with the Zariski topology.

\subsection{Critical ideals of higher rank}
Let $D$ be an almost Dedekind domain. The set $\inscrit(D)$ of critical maximal ideals of $D$ is a closed set of $\mmax$; by \cite[Lemma 2.3]{SP-scattered}, it follows that
\begin{equation*}
T_1:=\bigcap\{D_M\mid M\in\inscrit(D)\}
\end{equation*}
is an overring of $T_1$ whose maximal ideals are the extensions of the critical ideals of $D$. More generally, if $\alpha$ is an ordinal, we can define recursively the following sets \cite[Section 5]{SP-scattered}:
\begin{itemize}
\item $\inscrit_0(D):=\mmax$, $T_0:=D$;
\item if $\alpha=\gamma+1$ is a successor ordinal, then
\begin{equation*}
\inscrit_\alpha(D):=\{P\in\mmax\mid PT_\gamma\in\inscrit(T_\gamma)\};
\end{equation*}
\item if $\alpha$ is a limit ordinal, then
\begin{equation*}
\inscrit_\alpha(D):=\bigcap_{\gamma<\alpha}\inscrit_\gamma(D);
\end{equation*}
\item $T_\alpha:=\bigcap\{D_P\mid P\in\inscrit_\alpha(D)\}$.
\end{itemize}
Then, each $\inscrit_\alpha(D)$ is a closed set of $\mmax$, and each $T_\alpha$ is an overring of $D$ whose maximal ideals are the extensions of the ideals in $\inscrit_\alpha(D)$.

The descending chain $\{\inscrit_\alpha(D)\}_\alpha$ stabilizes; we call the minimal $\alpha$ such that $\inscrit_\alpha(D)=\inscrit_{\alpha+1}(D)$ the \emph{SP-rank} of $D$, and we set $\inscrit_\infty(D):=\inscrit_\alpha(D)$. If $\inscrit_\infty(D)=\emptyset$, we say that $D$ is \emph{SP-scattered}.

\subsection{Ideal functions}\label{sect:idfunct}
Let $D$ be an almost Dedekind domain and let $I$ be a nonzero fractional ideal of $D$. Then, $I$ induces a function
\begin{equation*}
\begin{aligned}
\nu_I\colon\mmax  & \longrightarrow \insZ,\\
M& \longmapsto v_M(I),
\end{aligned}
\end{equation*}
where $v_M$ is the valuation relative to $D_M$. This map behaves well with the ideal operations: if $I,J$ are ideals, then $\nu_{I+J}=\inf\{\nu_I,\nu_J\}$, $\nu_{IJ}=\nu_I+\nu_J$ and $\nu_{I\cap J}=\sup\{\nu_I,\nu_J\}$.

The continuity of $\nu_I$ is strongly linked with the radical factorization of $I$: indeed, if $I$ is a proper finitely generated ideal of $D$, then $\nu_I$ is continuous if and only if $I$ can be written as a finite product of radical ideals \cite[Propsition 3.10]{SP-scattered}. This fact allows to link continuous maps on $\mmax$ to the group $\Inv(D)$ of invertible ideals \cite[Sections 4 and 5]{SP-scattered}.

\section{Bounded-critical ideals}\label{sect:bounded}
If $I$ is a radical ideal of $D$, then its ideal function $\nu_I$ is just the characteristic function of the set $\V(I)$; likewise, if $\nu_I$ is the characteristic function of some subset of $\mmax$, then $I$ must be a radical ideal. Therefore, radical ideals are characterized (among proper ideals) by the fact that the supremum of their ideal functions is $1$. For these reason, in order to generalize critical ideals, we introduce the following definitions.
\begin{defin}
Let $D$ be an almost Dedekind domain and $I$ be a nonzero proper ideal. We say that $I$ is:
\begin{itemize}
\item \emph{$n$-bounded} (for some $n\inN$) if $\sup\nu_I\leq n$;
\item \emph{bounded} if $\nu_I$ is bounded.
\end{itemize}
\end{defin}

\begin{defin}
Let $D$ be an almost Dedekind domain and $M$ a maximal ideal of $D$. We say that $M$ is:
\begin{itemize}
\item \emph{$n$-critical} (for some $n\inN$) if it does not contain any $n$-bounded finitely generated ideal;
\item \emph{$\omega$-critical} (or \emph{bounded-critical}) if it does not contain any bounded finitely generated ideal.
\end{itemize}
We denote by $\critx{n}(D)$ the set of $n$-critical maximal ideals and by $\bcrit(D)$ the set of $\omega$-critical maximal ideals of $R$.
\end{defin}

The notation $\critx{n}(D)$ (instead of the perhaps more natural $\inscrit_n(D)$) is used to avoid confusion with the construction of $\inscrit_\alpha(D)$ at stage $\alpha=n$.
\begin{prop}\label{prop:critx}
Let $D$ be an almost Dedekind domain and $M$ a maximal ideal of $D$. Then, the following hold.
\begin{enumerate}[(a)]
\item\label{prop:critx:1} $\critx{1}(D)=\inscrit(D)$.
\item\label{prop:critx:n} $\{\critx{n}(D)\mid n\inN\}$ is a descending chain of closed subsets of $\mmax$.
\item\label{prop:critx:bcrit} $\bcrit(D)=\bigcap\{\critx{n}(D)\mid n\inN\}$; in particular, $\bcrit(D)$ is closed in $\mmax$.
\end{enumerate}
\end{prop}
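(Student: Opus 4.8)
The plan is to prove the three parts in turn. The common thread is that a finitely generated ideal $I$ contained in a maximal ideal $M$ is automatically nonzero and proper, that the notions of being $n$-bounded or bounded are intrinsic to $I$ (they do not refer to any ambient maximal ideal), and that $\V(I)\cap\mmax$ is open in $\mmax$ precisely because $I$ is finitely generated --- recall that on $\mmax$ the inverse and constructible topologies agree, and that the sets $\V(J)$ with $J$ finitely generated are subbasic open for the inverse topology.

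Part (\ref{prop:critx:1}) is a matter of unwinding the definitions. For a nonzero proper finitely generated ideal $I$ the ideal function $\nu_I$ is nonnegative, integer-valued and not identically $0$, so $I$ is $1$-bounded exactly when $\sup\nu_I=1$, which by the characterization recalled at the opening of Section \ref{sect:bounded} holds exactly when $I$ is radical. Hence ``$M$ contains a $1$-bounded finitely generated ideal'' and ``$M$ contains a nonzero radical finitely generated ideal'' are the same condition, and $\critx{1}(D)=\inscrit(D)$.

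For part (\ref{prop:critx:n}), the inclusion $\critx{n+1}(D)\subseteq\critx{n}(D)$ is immediate, since an $n$-bounded ideal is a fortiori $(n+1)$-bounded, so a maximal ideal containing an $n$-bounded finitely generated ideal also contains an $(n+1)$-bounded one. For closedness I would check that $\mmax\setminus\critx{n}(D)$ is open: given $M$ in this complement, fix an $n$-bounded finitely generated ideal $I\subseteq M$; then every maximal ideal $P\in\V(I)\cap\mmax$ still contains $I$, which is still nonzero, proper and $n$-bounded, so $\V(I)\cap\mmax$ is an open neighbourhood of $M$ disjoint from $\critx{n}(D)$.

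Finally, for part (\ref{prop:critx:bcrit}): if $M$ lies in every $\critx{n}(D)$ and $I\subseteq M$ is finitely generated, then $\sup\nu_I$ cannot be a finite integer $m$, for that would make $I$ an $m$-bounded finitely generated ideal contained in $M$; hence $\nu_I$ is unbounded and $M\in\bcrit(D)$. Conversely, an $n$-bounded finitely generated ideal sitting inside $M$ is in particular bounded, so $\bcrit(D)\subseteq\critx{n}(D)$ for every $n$. Therefore $\bcrit(D)=\bigcap_n\critx{n}(D)$, and this set is closed by part (\ref{prop:critx:n}). I do not expect a genuine obstacle; the only point that needs care is the closedness argument in part (\ref{prop:critx:n}), where the openness of $\V(I)\cap\mmax$ really does use that $I$ is finitely generated.
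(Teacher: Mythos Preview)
Your proof is correct and follows essentially the same route as the paper's: part~(\ref{prop:critx:1}) by identifying $1$-bounded with radical, part~(\ref{prop:critx:n}) by exhibiting $\V(I)$ as an open neighbourhood witnessing openness of the complement, and part~(\ref{prop:critx:bcrit}) as a direct consequence. You spell out a few points more explicitly than the paper does (both inclusions in~(\ref{prop:critx:bcrit}), and the reason $\V(I)$ is open in $\mmax$), but there is no substantive difference in strategy.
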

\begin{proof}
\ref{prop:critx:1} follows from the fact that the $1$-bounded ideals are exactly the radical ideals. In \ref{prop:critx:n}, the inclusion $\critx{n+1}(D)\subseteq\critx{n}(D)$ is obvious, and thus it is enough to show that each of these sets are closed. If $M\in\mmax\setminus\critx{n}(D)$, then it contains an $n$-bounded finitely generated ideal $I$; then, $\V(I)$ is an open set of $\mmax$ containing $M$ and disjoint from $\critx{n}(D)$. Hence, $\mmax\setminus\critx{n}(D)$ is open, i.e., $\critx{n}(D)$ is closed.

\ref{prop:critx:bcrit} follows directly from the previous point.
\end{proof}

We note that, in general, the inclusion $\critx{n+1}(D)\subseteq\critx{n}(D)$ may be proper: see Example \ref{ex:ncrit} below.

Using the notion of $n$-critical ideal, it is possible to generalize part of the theory of critical ideals and SP-domains. The following two results can be seen as partial generalizations of \cite[Proposition 3.10]{SP-scattered}.
\begin{prop}\label{prop:idbounded}
Let $D$ be an almost Dedekind domain, and let $I$ be a proper ideal of $D$. Consider the following conditions:
\begin{enumerate}[(i)]
\item\label{prop:idbounded:critx} $\V(I)\cap\critx{\omega}(D)=\emptyset$;
\item\label{prop:idbounded:bounded} $I$ is bounded.
\end{enumerate}
Then, \ref{prop:idbounded:critx} $\Longrightarrow$ \ref{prop:idbounded:bounded}. If $I$ is finitely generated, then \ref{prop:idbounded:critx} $\iff$ \ref{prop:idbounded:bounded}.
\end{prop}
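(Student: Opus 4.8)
The plan is to dispose of $(ii)\Rightarrow(i)$ directly and to reduce $(i)\Rightarrow(ii)$ to the theory of SP-domains by a compactness argument followed by a localization.

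For finitely generated $I$, the implication $(ii)\Rightarrow(i)$ is immediate: a bounded finitely generated $I$ is itself a bounded finitely generated ideal contained in every $M\in\V(I)$, so no such $M$ is $\omega$-critical, i.e.\ $\V(I)\cap\bcrit(D)=\emptyset$.

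For $(i)\Rightarrow(ii)$ I would first reduce to a single stage $n$. The set $\V(I)$ is closed in $\Spec D$ with the constructible topology, which is compact, and it contains no nonmaximal prime (as $I\neq 0$), so it is a compact subspace of $\mmax$. By Proposition \ref{prop:critx}\ref{prop:critx:bcrit} we have $\bcrit(D)=\bigcap_n\critx{n}(D)$, a decreasing intersection of closed sets, so $\{\V(I)\cap\critx{n}(D)\}_n$ is a decreasing family of closed subsets of the compact space $\V(I)$ with empty intersection; hence $\V(I)\cap\critx{n}(D)=\emptyset$ for some $n$. Since $\critx{n}(D)$ is closed, $\mmax$ has a basis of clopen sets, and $\V(I)$ is compact, there is a clopen $U\subseteq\mmax$ with $\V(I)\subseteq U$ and $U\cap\critx{n}(D)=\emptyset$. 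Let $T:=\bigcap\{D_P\mid P\in U\}$; this is an overring of $D$ whose maximal space is canonically homeomorphic to $U$ — hence compact — with $T_{PT}=D_P$ for $P\in U$, so that extension of ideals preserves all relevant valuations. Each $P\in U$ lies outside $\critx{n}(D)$, so it contains an $n$-bounded finitely generated ideal of $D$, whose extension to $T$ is a finitely generated $n$-bounded ideal of $T$ inside $PT$; therefore $\critx{n}(T)=\emptyset$. Since $\nu_I$ vanishes off $\V(I)$, it suffices to show $\nu_I$ is bounded on $U$, i.e.\ that $IT$ is bounded in $T$; and since $I$ here is arbitrary (finitely generated or not, $IT$ being in any case an ideal of $T$), the whole problem is reduced to the statement: \emph{if $T$ is an almost Dedekind domain with $\mmax_T$ compact and $\critx{n}(T)=\emptyset$, then every ideal of $T$ is bounded.}

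When $n=1$ this is quick: $\critx{1}(T)=\inscrit(T)=\emptyset$ means $T$ has no critical maximal ideal, hence $T$ is an SP-domain, so every proper ideal of $T$ is a finite product $R_1\cdots R_m$ of radical ideals and $\nu_{IT}=\nu_{R_1}+\cdots+\nu_{R_m}$ is bounded by $m$. The main obstacle is the step to general $n$: a critical maximal ideal of $T$ may contain an $n$-bounded finitely generated ideal without containing a radical one, so $T$ need not be an SP-domain. My plan here is an induction along the overring chain: pass to $T_1:=\bigcap\{T_P\mid P\in\inscrit(T)\}$ (noting that $\critx{n}(T_1)$ is again empty and $\mmax_{T_1}=\inscrit(T)$ is again compact), and show that if $\nu_{IT}$ is unbounded on $\mmax_T$ then $\nu_{IT_1}$ is unbounded on $\mmax_{T_1}$. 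The lever is the observation that every cluster point of a sequence along which $\nu_{IT}\to\infty$ must be a critical maximal ideal — otherwise it would possess a clopen neighbourhood whose associated localization is an SP-domain, forcing $\nu_{IT}$ to be bounded on that neighbourhood, a contradiction. Iterating, such a cluster point would survive into every term of the chain; the delicate part, which I expect to demand the most work, is to control the restriction of $\nu_{IT}$ to the closed (but in general not open) set $\inscrit(T)$ — where $\nu_{IT}$ may drop, being only lower semicontinuous — and to guarantee that the recursion terminates. This is exactly the point at which the factorization theory for $\omega$-bounded ideals built from these definitions (the analogue, for $\bcrit$, of radical factorization in SP-domains) is needed.
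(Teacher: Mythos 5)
Your treatment of (ii)$\Rightarrow$(i) and your preliminary reductions are fine, but the main implication (i)$\Rightarrow$(ii) is not actually proved: after the (correct) compactness argument giving a uniform $n$ with $\V(I)\cap\critx{n}(D)=\emptyset$ and the (correct, though unnecessary) passage to a compact overring $T$ with $\critx{n}(T)=\emptyset$, you only settle the case $n=1$ via SP-domain theory and explicitly leave the case $n\geq 2$ as a plan. That plan --- a transfinite recursion along the overrings $T_\gamma$ tracking cluster points of the sets where $\nu_{IT}$ is large --- is not a proof, and it is also the wrong direction of dependency: the radical-type factorization theory for $n$-bounded ideals that you say you would need is exactly Proposition \ref{prop:nfact}, which in the paper is \emph{deduced from} the present proposition, not available before it. So as written there is a genuine gap precisely at the step you flag as the main obstacle.

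The missing idea is much more elementary than a factorization of $I$: you only need to bound $\nu_I$, and for that a single bounded finitely generated ideal with the same zero set as $I$ suffices, via the standard ``power trick''. Concretely: for each $M\in\V(I)$ pick a bounded finitely generated $J_M\subseteq M$; the relatively clopen sets $\V(J_M)\cap\V(I)$ cover the compact space $\V(I)$, so finitely many $J_{M_1},\dots,J_{M_k}$ suffice. Put $L:=J_{M_1}\cdots J_{M_k}$. Then $L$ is finitely generated, $\nu_L=\nu_{J_{M_1}}+\cdots+\nu_{J_{M_k}}$ is bounded, and $\V(I)\subseteq\V(L)$, i.e.\ $L\subseteq\rad(I)$; since $L$ is finitely generated this gives $L^t\subseteq I$ for some $t\inN$, whence $\nu_I\leq t\,\nu_L$ is bounded. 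Note that the $J_M$ need not be $n$-bounded for a common $n$: each is bounded by its own constant, and the finite product absorbs all of them. This is essentially the paper's argument, and it makes both your reduction to a uniform $n$ and the localization to $T$ unnecessary. (If you do want to keep your reduction, the same trick finishes it in one line: cover the compact $\mmax_T$ by finitely many $\V(J_i)$ with $J_i$ $n$-bounded, set $L=J_1\cdots J_m\subseteq\Jac(T)\subseteq\rad(IT)$, and conclude as above.)
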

\begin{proof}
Suppose \ref{prop:idbounded:critx} holds. For each $M\in \V(I)$, let $J_M$ be a bounded finitely generated ideal such that $J_M\subseteq M$. (Such a $J_M$ exists since $M$ is not $\omega$-critical.) Then, $L_M:=I+J_M$ is a bounded finitely generated ideal such that $I\subseteq L_M\subseteq M$. Therefore, $\{\V(L_M)\mid M\in \V(I)\}$ is an open cover of $\V(I)$ (with respect to the constructible topology). The space $\V(I)$ is compact with respect to the constructible topology (because it is homeomorphic to the spectrum of $D/I$), and thus we can find a finite subcover $\{\V(L_{M_1}),\ldots,\V(L_{M_k})\}$. Let $L:=L_{M_1}\cdots L_{M_k}$. Then, $L$ is finitely generated and $\V(L)=\V(I)$, i.e. $\rad(L)=\rad(I)$; therefore, there is a $t$ such that $L^t\subseteq I$. However,
\begin{equation*}
\nu_{L^t}=t\nu_L=t(\nu_{L_{M_1}}+\cdots+\nu_{L_{M_k}}),
\end{equation*}
and thus $\nu_{L^t}$ is bounded; hence $\nu_I\leq\nu_{L^t}$ is bounded too. The claim is proved.

If $I$ is finitely generated, then \ref{prop:idbounded:bounded} $\Longrightarrow$ \ref{prop:idbounded:critx} follows directly from the definitions.
\end{proof}

\begin{prop}\label{prop:nfact}
Let $D$ be an almost Dedekind domain, and fix $n\inN$. Let $I$ be a proper ideal of $D$. Consider the following conditions:
\begin{enumerate}[(i)]
\item\label{prop:nfact:crit} $\V(I)\cap\critx{n}(D)=\emptyset$;
\item\label{prop:nfact:fact} there are $n$-bounded finitely generated ideals $J_1,\ldots,J_k$ such that $I=J_1\cdots J_k$.
\end{enumerate}
Then, \ref{prop:nfact:crit} $\Longrightarrow$ \ref{prop:nfact:fact}. If $I$ is finitely generated, then \ref{prop:nfact:crit} $\iff$ \ref{prop:nfact:fact}.
\end{prop}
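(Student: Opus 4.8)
The plan is to mirror the proof of Proposition~\ref{prop:idbounded} together with the ``peeling'' argument behind radical factorization in SP-domains, with $n$-bounded ideals playing the role of the radical ($1$-bounded) ideals. The implication \ref{prop:nfact:fact} $\Longrightarrow$ \ref{prop:nfact:crit} is immediate (and holds for every proper $I$): if $I=J_1\cdots J_k$ with each $J_i$ an $n$-bounded finitely generated ideal and $M\in\V(I)$, then $1\leq\nu_I(M)=\nu_{J_1}(M)+\cdots+\nu_{J_k}(M)$, so $\nu_{J_i}(M)\geq 1$ for some $i$, i.e.\ $J_i\subseteq M$; hence $M\notin\critx{n}(D)$. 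For \ref{prop:nfact:crit} $\Longrightarrow$ \ref{prop:nfact:fact}, note that $\bcrit(D)\subseteq\critx{n}(D)$ by Proposition~\ref{prop:critx}\ref{prop:critx:bcrit}, so \ref{prop:nfact:crit} yields $\V(I)\cap\bcrit(D)=\emptyset$ and hence, by Proposition~\ref{prop:idbounded}, $I$ is bounded; since a finite product of finitely generated ideals is finitely generated, the content lies in the case where $I$ is finitely generated, which I treat from now on. Set $s:=\sup\nu_I$: this is a positive integer (finite because $I$ is bounded, positive because $I$ is proper), and I argue by induction on $s$. If $s\leq n$ there is nothing to prove (take $k=1$, $J_1=I$), so assume $s>n$.

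The heart of the argument is to replace $I$ by a single $n$-bounded finitely generated ideal $S$ with $I\subseteq S$ and $\V(S)=\V(I)$. For each $M\in\V(I)$ pick, using $M\notin\critx{n}(D)$, an $n$-bounded finitely generated ideal $J_M\subseteq M$, and set $S_M:=I+J_M$; then $S_M$ is finitely generated, $I\subseteq S_M\subseteq M$, and $\nu_{S_M}=\inf\{\nu_I,\nu_{J_M}\}\leq\nu_{J_M}\leq n$, so $S_M$ is $n$-bounded. The sets $\V(S_M)$, for $M\in\V(I)$, are open (each $S_M$ being finitely generated), each contains its $M$ and is contained in $\V(I)$, and $\V(I)$ is compact in the constructible topology (as recalled in the proof of Proposition~\ref{prop:idbounded}); so finitely many of them cover, say $\V(I)=\V(S_{M_1})\cup\cdots\cup\V(S_{M_r})$. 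Put $S:=S_{M_1}\cap\cdots\cap S_{M_r}$. An almost Dedekind domain is Pr\"ufer, so finite intersections of finitely generated ideals are finitely generated; thus $S$ is finitely generated, and $I\subseteq S\subsetneq D$, $\nu_S=\sup\{\nu_{S_{M_1}},\ldots,\nu_{S_{M_r}}\}\leq n$, and $\V(S)=\bigcup_i\V(S_{M_i})=\V(I)$.

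Now factor $S$ out. Since $I\subseteq S$ and $S$ is invertible, $I_1:=IS^{-1}$ is an integral (as $I\subseteq S$) finitely generated ideal with $\nu_{I_1}=\nu_I-\nu_S$. Off $\V(I)=\V(S)$ both $\nu_I$ and $\nu_S$ vanish, while on $\V(I)$ one has $\nu_S\geq 1$ and $\nu_I\leq s$; hence $\V(I_1)\subseteq\V(I)$ and $\sup\nu_{I_1}\leq s-1$. In particular $\V(I_1)\cap\critx{n}(D)\subseteq\V(I)\cap\critx{n}(D)=\emptyset$. If $I_1=D$ then $I=S$ and we are done; otherwise $I_1$ is a proper finitely generated ideal satisfying \ref{prop:nfact:crit} with $\sup\nu_{I_1}<s$, so by the inductive hypothesis $I_1=J_1\cdots J_{k-1}$ with each $J_i$ an $n$-bounded finitely generated ideal, and $I=S\cdot J_1\cdots J_{k-1}$ is the required factorization.

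The step I expect to carry the weight — though it is not deep — is the construction of $S$: the local $n$-bounded ideals $S_M$ must be amalgamated into a single finitely generated $n$-bounded ideal with the same radical as $I$. The point to watch, and the one place this departs from the proof of Proposition~\ref{prop:idbounded} (where the analogous ideal is formed as a \emph{product}, which only bounds $\nu$ by $rn$), is that one must take a finite \emph{intersection} in order to preserve the bound $n$; that this intersection remains finitely generated is exactly where the Pr\"ufer property is used. The remainder is the valuative bookkeeping ensuring $\sup\nu_{I_1}<\sup\nu_I$ and $\V(I_1)\subseteq\V(I)$, which is what drives the induction.
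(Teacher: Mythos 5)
Your proof is correct and follows essentially the same route as the paper's: the same $I+J_M$ construction, the same compactness of $\V(I)$ to extract a finite subcover, the same finite \emph{intersection} $L$ (with the Pr\"ufer property guaranteeing $L$ is finitely generated and the bound $n$ preserved), and the same induction on $\sup\nu_I$ after peeling off $L$ via $IL^{-1}$. The only deviation is that you restrict \ref{prop:nfact:crit} $\Longrightarrow$ \ref{prop:nfact:fact} to finitely generated $I$, whereas the paper runs the identical induction for an arbitrary proper ideal --- though there the base case ``take $J_1=I$'' does not actually produce a finitely generated factor when $I$ is not finitely generated, so your restriction is, if anything, the more careful reading of the statement.
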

\begin{proof}
\ref{prop:nfact:crit} $\Longrightarrow$ \ref{prop:nfact:fact} Since $\V(I)\cap\critx{n}(D)=\emptyset$, then also $\V(I)\cap\critx{\omega}(D)=\emptyset$; therefore, $\nu_I$ is bounded by Proposition \ref{prop:idbounded}. We proceed by induction on $\sup\nu_I$. If $\sup\nu_I\leq n$ the claim is trivial (just take $J_1=I$). Suppose that $\sup\nu_I>n$. Similarly to the previous proof, for each $M\in \V(I)$ let $J_M$ be an $n$-bounded finitely generated ideal such that $J_M\subseteq M$, and define $L_M:=I+J_M$; then, $L_M$ is a $n$-bounded finitely generated ideal such that $I\subseteq L_M\subseteq M$. The family $\{\V(L_M)\mid M\in \V(I)\}$ is an open cover of $\V(I)$ (with respect to the constructible topology), and since $\V(I)$ is compact we can find a finite subcover $\{\V(L_{M_1}),\ldots,\V(L_{M_k})\}$. Let $L:=L_{M_1}\cap\cdots\cap L_{M_k}$. Then, $L$ is a finitely generated ideal such that $\sup\nu_L\leq n$, $I\subseteq L$ and $\V(I)=\V(L)$. Set $I_1:=IL^{-1}$: then, $I_1$ is again a proper ideal of $D$, and
\begin{equation*}
\nu_{I_1}(M)=\nu_I(M)-\nu_L(M)\leq\nu_I(M)-1
\end{equation*}
for every $M\in \V(I)$. Therefore, $\sup\nu_{I_1}<\sup\nu_I$, and by induction $I_1$ has a factorization $I_1=J_1\cdots J_t$ as product of $n$-bounded ideals. Hence, $I=I_1L=J_1\cdots J_tL$ has this kind of factorization too. The claim is proved.

Suppose that $I$ is finitely generated and that \ref{prop:nfact:fact} holds. Let $M\in \V(I)$. Since $J_1\cdots J_k\subseteq M$, there is an $i$ such that $J_i\subseteq M$; since $I$ is invertible, also $J_i$ is invertible and thus finitely generated. By definition, $M$ cannot be $n$-critical. Hence $\V(I)\cap\critx{n}(D)=\emptyset$.
\end{proof}

\begin{oss}
The equivalence of the two conditions does not hold if $I$ is not finitely generated. Indeed, if $M$ is $\omega$-critical, then $M$ is bounded, but $\V(M)\cap\critx{\omega}(D)=\{M\}\neq\emptyset$. For the same reason, if $n\inN$, then $M$ has a factorization as product of $n$-critical ideals (namely, $M$ itself) but $\V(M)\cap\critx{n}(D)=\{M\}\neq\emptyset$.
\end{oss}

If we use these results on all ideals, we obtain equivalences similar to the ones characterizing SP-domains.
\begin{cor}\label{cor:SP-bounded}
Let $D$ be an almost Dedekind domain. Then, the following are equivalent:
\begin{enumerate}[(i)]
\item\label{cor:SP-bounded:critx} $\critx{\omega}(D)=\emptyset$;
\item\label{cor:SP-bounded:princ} every nonzero principal proper ideal is bounded;
\item\label{cor:SP-bounded:fg} every nonzero finitely generated proper ideal is bounded;
\item\label{cor:SP-bounded:id} every nonzero proper ideal is bounded;
\item\label{cor:SP-bounded:fract} $\nu_I$ is bounded for every fractional ideal $I$.
\end{enumerate}
\end{cor}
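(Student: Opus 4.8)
The plan is to run the cycle (i) $\Rightarrow$ (iv) $\Rightarrow$ (iii) $\Rightarrow$ (ii) $\Rightarrow$ (i) and then add the two easy implications linking (iv) and (v). Essentially all of the real work has already been carried out in Proposition~\ref{prop:idbounded}, so the argument should be short; the only implication carrying any content is (ii) $\Rightarrow$ (i).

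For (i) $\Rightarrow$ (iv): if $\critx{\omega}(D)=\emptyset$, then $\V(I)\cap\critx{\omega}(D)=\emptyset$ for \emph{every} nonzero proper ideal $I$, and Proposition~\ref{prop:idbounded} (the implication \ref{prop:idbounded:critx} $\Rightarrow$ \ref{prop:idbounded:bounded}, which does not require $I$ to be finitely generated) gives that $I$ is bounded. The implications (iv) $\Rightarrow$ (iii) $\Rightarrow$ (ii) are immediate, since a nonzero principal proper ideal is in particular a nonzero finitely generated proper ideal, which is in particular a nonzero proper ideal. For (ii) $\Rightarrow$ (i), the point is simply to test $\omega$-criticality on principal ideals: given any maximal ideal $M$, choose a nonzero $d\in M$; then $(d)$ is a nonzero principal proper ideal, hence bounded by (ii), and it is a finitely generated ideal contained in $M$, so $M\notin\critx{\omega}(D)$. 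As $M$ was arbitrary, $\critx{\omega}(D)=\emptyset$.

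Finally, (v) $\Rightarrow$ (iv) is trivial because every integral ideal is a fractional ideal. For (iv) $\Rightarrow$ (v) I would write an arbitrary nonzero fractional ideal $I$ as $I=d^{-1}J$ with $0\neq d\in D$ and $J=dI\subseteq D$ an integral ideal; then $\nu_I=\nu_J-\nu_{(d)}$ by the additivity $\nu_{AB}=\nu_A+\nu_B$, and both $\nu_J$ and $\nu_{(d)}$ are bounded by (iv) (or trivially bounded in the degenerate cases $J=D$ or $d$ a unit, where the corresponding ideal function is identically $0$), so $\nu_I$ is bounded. There is no genuine obstacle here; the only things to watch are these degenerate cases and remembering that the "bounded" direction of Proposition~\ref{prop:idbounded} applies to arbitrary, not merely finitely generated, ideals.
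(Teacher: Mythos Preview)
Your proof is correct and follows essentially the same approach as the paper's: both rely on Proposition~\ref{prop:idbounded} for the nontrivial direction and otherwise chase trivial implications. The only minor difference is that for \ref{cor:SP-bounded:id} $\Rightarrow$ \ref{cor:SP-bounded:fract} the paper invokes an external decomposition $I=JL^{-1}$ with $L$ finitely generated, whereas your more elementary $I=d^{-1}J$ works just as well.
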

\begin{proof}
\ref{cor:SP-bounded:critx} $\iff$ \ref{cor:SP-bounded:fg} follows from Proposition \ref{prop:idbounded}. \ref{cor:SP-bounded:fract} $\Longrightarrow$ \ref{cor:SP-bounded:id} $\Longrightarrow$ \ref{cor:SP-bounded:fg} $\Longrightarrow$ \ref{cor:SP-bounded:princ} is obvious. To show \ref{cor:SP-bounded:princ} $\Longrightarrow$ \ref{cor:SP-bounded:id}, let $I\neq(0)$ be an ideal. For every $i\in I\setminus\{0\}$, $\nu_{iD}$ is bounded; since $\nu_I\leq\nu_{iD}$, also $\nu_I$ is bounded. Finally, if \ref{cor:SP-bounded:id} holds, and $I\in\insfracid(D)$, then $I=JL^{-1}$ for some proper ideals $J,L$ of $D$, with $L$ finitely generated \cite[Proposition 3.3]{SP-scattered}, and thus $\nu_I=\nu_J-\nu_L$ is bounded.
\end{proof}

\begin{cor}\label{cor:nSP-bounded}
Let $D$ be an almost Dedekind domain and fix an $n\inN$. Then, the following are equivalent:
\begin{enumerate}[(i)]
\item\label{cor:nSP-bounded:critx} $\critx{n}(D)=\emptyset$;
\item\label{cor:nSP-bounded:fg} every nonzero finitely generated proper ideal is the product of $n$-bounded ideals;
\end{enumerate}
\end{cor}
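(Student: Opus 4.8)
The plan is to read off both implications directly from Proposition~\ref{prop:nfact}, in exactly the same way that Corollary~\ref{cor:SP-bounded} is read off from Proposition~\ref{prop:idbounded}.

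First I would prove \ref{cor:nSP-bounded:critx}~$\Longrightarrow$~\ref{cor:nSP-bounded:fg}. Assume $\critx{n}(D)=\emptyset$ and let $I$ be a nonzero finitely generated proper ideal of $D$. Then $\V(I)\cap\critx{n}(D)=\emptyset$ holds trivially, so condition~\ref{prop:nfact:crit} of Proposition~\ref{prop:nfact} is satisfied; since $I$ is finitely generated, that proposition yields condition~\ref{prop:nfact:fact}, i.e.\ a factorization $I=J_1\cdots J_k$ with each $J_i$ an $n$-bounded (finitely generated) ideal. This is precisely \ref{cor:nSP-bounded:fg}.

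For the converse \ref{cor:nSP-bounded:fg}~$\Longrightarrow$~\ref{cor:nSP-bounded:critx} I would argue by contradiction: suppose some maximal ideal $M$ lies in $\critx{n}(D)$. Pick $a\in M\setminus\{0\}$; then $aD$ is a nonzero finitely generated proper ideal, so by hypothesis $aD=J_1\cdots J_k$ with each $J_i$ an $n$-bounded ideal. Because $D$ is Pr\"ufer and $aD$ is invertible, each factor $J_i$ is invertible (one checks $J_i^{-1}=(aD)^{-1}\prod_{j\neq i}J_j$), hence finitely generated. Since $aD\subseteq M$ and $M$ is prime, some $J_i\subseteq M$, and this $J_i$ is an $n$-bounded finitely generated ideal contained in $M$, contradicting $M\in\critx{n}(D)$. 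Therefore $\critx{n}(D)=\emptyset$.

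I do not expect any real obstacle here: the corollary is just Proposition~\ref{prop:nfact} applied simultaneously to all finitely generated ideals. The only point worth spelling out is that in \ref{cor:nSP-bounded:fg} the factors $J_i$, although not assumed finitely generated, automatically are, since a divisor of an invertible ideal in a Pr\"ufer domain is again invertible; without this remark the reduction to Proposition~\ref{prop:nfact} in the converse direction would be incomplete.
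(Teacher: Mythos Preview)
Your proposal is correct and follows the same approach as the paper, which simply says ``Follows from Proposition~\ref{prop:nfact}.'' Your extra observation---that the factors $J_i$ of an invertible ideal are automatically invertible, hence finitely generated---is a genuine detail the paper's one-line proof leaves implicit, and it is exactly what is needed to match condition~\ref{cor:nSP-bounded:fg} (which does not assume the factors are finitely generated) with condition~\ref{prop:nfact:fact} of Proposition~\ref{prop:nfact} (which does).
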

\begin{proof}
Follows from Proposition \ref{prop:nfact}.
\end{proof}

A group strongly corrected with $\Inv(D)$ is the group of divisorial ideals of $D$. When $D$ is an almost Dedekind domain (or, more generally, a one-dimensional Pr\"ufer domain), the set $\Div(D)$ of divisorial ideals is a group under the $v$-product $I\times_vJ:=(IJ)^v=(D:(D:IJ))$. Both $\Inv(D)$ and $\Div(D)$ are $\ell$-groups under the opposite of the containment order (i.e., $I\leq J$ if and only if $I\supseteq J$), and from this point of view $\Div(D)$ is the completion of $\Inv(D)$ (see \cite{darnel-lgroups} for background on $\ell$-groups and \cite[Section 3]{HK-Olb-Re} and \cite[Section 6]{SP-scattered} for discussion of $\Inv(D)$ and $\Div(D)$ as $\ell$-groups). Using $\omega$-critical ideals we can prove the following partial generalization of \cite[Theorem 6.5]{SP-scattered}.
\begin{prop}\label{prop:DivD}
Let $D$ be an almost Dedekind domain such that $\critx{\omega}(D)=\emptyset$. Then, $\Div(D)$ is a free group.
\end{prop}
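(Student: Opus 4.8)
The plan is to realise $\Div(D)$ as a subgroup of the group $B(\mmax,\insZ)$ of bounded $\insZ$-valued functions on the set $\mmax$, and then to invoke N\"obeling's theorem---by which $B(S,\insZ)$ is a free abelian group for every set $S$---together with the elementary fact that every subgroup of a free abelian group is free.

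First I would note that, since $\critx{\omega}(D)=\emptyset$, Corollary \ref{cor:SP-bounded} ensures that $\nu_I$ is bounded for every nonzero fractional ideal $I$; in particular $\nu_I\in B(\mmax,\insZ)$ for every divisorial ideal $I$. Next I would use the ideal function to embed $\Div(D)$ into $\insZ^{\mmax}$: the assignment
\begin{equation*}
\nu\colon\Div(D)\longrightarrow\insZ^{\mmax},\qquad I\longmapsto\nu_I,
\end{equation*}
is injective, because a fractional ideal of the Pr\"ufer domain $D$ is the intersection of its localizations and $ID_M=(MD_M)^{\nu_I(M)}$, so $I$ is recovered from $\nu_I$; and it is a group homomorphism from $(\Div(D),\times_v)$ to $(\insZ^{\mmax},+)$, which is exactly the description of $\Div(D)$ as a sub-$\ell$-group of $\insZ^{\mmax}$ recorded in \cite[Section 6]{SP-scattered}. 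Combining these two facts, $\nu$ identifies $\Div(D)$ with a subgroup of $B(\mmax,\insZ)$.

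The conclusion is then immediate: $B(\mmax,\insZ)$ is free abelian by N\"obeling's theorem, hence so is its subgroup $\nu(\Div(D))\cong\Div(D)$. Since the substance is carried by the two external inputs, I do not anticipate a genuine difficulty; the point that deserves attention is verifying that $\nu$ is a group homomorphism on the whole of $\Div(D)$---equivalently, that the $v$-product of divisorial ideals corresponds to pointwise addition of the associated ideal functions---and not merely on the subgroup $\Inv(D)$ of invertible ideals, for which one leans on the $\ell$-group picture of $\Div(D)$ recalled above.
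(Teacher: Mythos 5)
There is a genuine gap, and it sits exactly at the point you yourself flagged as ``the point that deserves attention'': the assignment $I\mapsto\nu_I$ is \emph{not} a group homomorphism from $(\Div(D),\times_v)$ to $(\insZ^{\mmax},+)$. It is one on $\Inv(D)$, because for the ordinary ideal product one has $\nu_{IJ}=\nu_I+\nu_J$; but the $v$-product destroys this as soon as there is a divisorial ideal that is not invertible (and the hypothesis $\critx{\omega}(D)=\emptyset$ does not force $\Div(D)=\Inv(D)$). Indeed, let $I$ be divisorial but not invertible and let $L:=(D:I)$ be its inverse in $(\Div(D),\times_v)$, so that $I\times_vL=D$ and $\nu_{I\times_vL}=0$. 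If $\nu$ were a homomorphism we would get $\nu_L=-\nu_I$, hence $\nu_{IL}=\nu_I+\nu_L=0$, hence $IL=\bigcap_M ILD_M=D$ and $I$ would be invertible --- a contradiction. So $\nu_{I\times_v L}\neq\nu_I+\nu_L$. The paper makes precisely this point in the remark immediately following the proposition, which is there to warn against the argument you propose. Your injectivity claim and the appeal to N\"obeling's theorem are both fine, but without the homomorphism property the map $\nu$ only identifies $\Div(D)$ with a \emph{subset}, not a subgroup, of the bounded functions, and the conclusion does not follow.

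The repair is the route the paper actually takes: restrict $I\mapsto\nu_I$ to $\Inv(D)$, where it is a genuine injective $\ell$-group homomorphism into $\mathcal{F}_b(\mmax,\insZ)$ (boundedness of the images is where $\critx{\omega}(D)=\emptyset$ and Corollary \ref{cor:SP-bounded} enter). Since $\mathcal{F}_b(\mmax,\insZ)$ is a \emph{complete} $\ell$-group, it contains a copy of the completion of $\Inv(D)$, and that completion is $\Div(D)$; this embeds $\Div(D)$ into $\mathcal{F}_b(\mmax,\insZ)$ as a subgroup, but via a map that does \emph{not} agree with $I\mapsto\nu_I$ on non-invertible divisorial ideals. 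From there your intended finish is correct: $\mathcal{F}_b(\mmax,\insZ)$ is free by N\"obeling, and subgroups of free abelian groups are free.
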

\begin{proof}
Let $\mathcal{F}_b(\mmax,\insZ)$ be the group of all bounded functions $\mmax\longrightarrow\insZ$. Since $\critx{\omega}(D)$ is empty, by Corollary \ref{cor:SP-bounded} every finitely generated fractional ideal is invertible; therefore, the assignment $I\mapsto\nu_I$ defines a map $\Inv(D)\longrightarrow\mathcal{F}_b(\mmax,\insZ)$ that is also a $\ell$-group homomorphism. Since $\mathcal{F}_b(\mmax,\insZ)$ is a complete $\ell$-group, it contains the completion of $\Inv(D)$, i.e., it contains a subgroup isomorphic to $\Div(D)$. However, since $\mathcal{F}_b(\mmax,\insZ)$ is a free group \cite[Satz 1]{nobeling}, this implies that $\Div(D)$ is free too. The claim is proved.
\end{proof}

\begin{oss}
Proposition \ref{prop:DivD} shows the existence of a map $\phi:\Div(D)\longrightarrow\mathcal{F}_b(\mmax,\insZ)$ such that $\Phi(\Div(D))$ is isomorphic to $\Div(D)$. This map does \emph{not} coincide with the map associating to each $I\in\Div(D)$ its ideal function $\nu_I$ (which belongs to $\mathcal{F}_b(\mmax,\insZ)$ by Corollary \ref{cor:SP-bounded}): indeed, if $I$ is divisorial ideal but not finitely generated, then $-\nu_I$ is not in the form $\nu_J$ for any ideal $J$ (otherwise $IJ=D$ and $I$ would be invertible). In particular, if $L=(D:I)$ is the inverse of $I$ in $\Div(D)$, then $\nu_L\neq-\nu_I$. Therefore, the assignment $I\mapsto\nu_I$ is not a group homomorphism when seen as a map $\Div(D)\longrightarrow\mathcal{F}_b(\mmax,\insZ)$.
\end{oss}

Fix now an $n\inN\cup\{\omega\}$. As with critical ideals, it is possible to recursively define a sequence $\{\critx{n}_\alpha(D)\}_\alpha$ of subsets of $\mmax$ and a sequence $\{\phantom{}^nT_\alpha\}_\alpha$ of overrings in the following way:
\begin{itemize}
\item $\critx{n}_0(D):=\mmax$, $\phantom{}^nT_0:=D$;
\item if $\alpha=\gamma+1$ is a successor ordinal, then
\begin{equation*}
\critx{n}_\alpha(D):=\{P\in\mmax\mid PT_\gamma\in\critx{n}(\phantom{}^nT_\gamma)\};
\end{equation*}
\item if $\alpha$ is a limit ordinal, then
\begin{equation*}
\critx{n}_\alpha(D):=\bigcap_{\gamma<\alpha}\critx{n}_\gamma(D);
\end{equation*}
\item $\phantom{}^nT_\alpha:=\bigcap\{D_P\mid P\in\critx{n}_\alpha(D)\}$.
\end{itemize} 
It is easy to see that $\{\critx{n}_\alpha(D)\}_\alpha$ is a descending sequence of closed subsets of $\mmax$, while $\{\phantom{}^nT_\alpha\}_\alpha$ is an ascending sequence of overrings such that $\mmax_{\phantom{}^nT_\alpha}$ corresponds to $\critx{n}_\alpha(R)$, in the sense that the maximal ideals of $\phantom{}^nT_\alpha$ are the extensions of the elements of $\critx{n}_\alpha(R)$.

The sequence $\{\critx{n}_\alpha(D)\}_\alpha$ must stabilize; we call the minimal ordinal $\alpha$ such that $\critx{n}_\alpha(D)=\critx{n}_{\alpha+1}(D)$ the \emph{$n$-SP-rank} of $D$. In this case, we denote by $\critx{n}_\infty(D)$ the set $\critx{n}_\alpha(D)$; equivalently, $\critx{n}_\infty(D)$ is just the intersection of all $\critx{n}_\alpha(D)$.

Using this sequence, it is possible to mimic the reasoning and the results in \cite[Section 5]{SP-scattered} using $n$-critical or bounded-critical ideals instead of simply critical ideals; indeed, the most delicate point in the proof of \cite[Proposition 5.8]{SP-scattered} is the fact that the kernel of the natural extension map $\Psi_\gamma:\Inv(T_\gamma)\longrightarrow\Inv(T_{\gamma+1})$ is isomorphic to a set of continuous maps of compact support, namely to $\mathcal{C}_c(\inscrit_\gamma(R)\setminus\inscrit_{\gamma+1}(R),\insZ)$, and, since all these maps are bounded, the kernel is free. The same property holds also for $n$-critical ideals.
\begin{prop}\label{prop:freekernel}
Let $D$ be an almost Dedekind domain, $n\inN\cup\{\omega\}$. For every ordinal $\gamma$, the kernel of the map $\Psi_\gamma:\Inv(\phantom{}^nT_\gamma)\longrightarrow\Inv(\phantom{}^nT_{\gamma+1})$ is free.
\end{prop}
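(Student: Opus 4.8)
The plan is to reduce to the first stage $\gamma=0$ and then exhibit $\ker\Psi_\gamma$ as a subgroup of the group of bounded $\insZ$-valued functions on a maximal space, which is free by \cite[Satz~1]{nobeling} (every subgroup of a free abelian group being free). For the reduction, note that $S:=\phantom{}^nT_\gamma$ is an overring of $D$ whose localizations at maximal ideals are localizations of $D$, hence DVRs, so $S$ is again an almost Dedekind domain (if $\critx{n}_\gamma(D)=\emptyset$ then $S=K$ and $\Psi_\gamma$ has trivial domain, so there is nothing to prove, and we may assume otherwise). Identifying $\mmax_S$ with $\critx{n}_\gamma(D)$ through $P\mapsto PS$, the recursive definitions give that $\critx{n}(S)$ corresponds to $\critx{n}_{\gamma+1}(D)$, that $S_{PS}=D_P$, and hence that $\phantom{}^nT_{\gamma+1}=\bigcap\{S_Q\mid Q\in\critx{n}(S)\}$ and that $\Psi_\gamma$ is precisely the extension map $\Inv(S)\longrightarrow\Inv\bigl(\bigcap\{S_Q\mid Q\in\critx{n}(S)\}\bigr)$. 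Renaming $S$ as $D$, it therefore suffices to prove: for every almost Dedekind domain $D$ (and the fixed $n$), the kernel of the extension map $\Psi\colon\Inv(D)\longrightarrow\Inv(T)$ is free, where $T:=\bigcap\{D_M\mid M\in\critx{n}(D)\}$.

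Since the maximal ideals of $T$ are the extensions of the ideals in $\critx{n}(D)$ and $T_{MT}=D_M$, an invertible ideal $I$ of $D$ satisfies $IT=T$ exactly when $\nu_I(M)=0$ for every $M\in\critx{n}(D)$; that is, $\ker\Psi=\{I\in\Inv(D)\mid\nu_I\equiv 0\text{ on }\critx{n}(D)\}$. The crucial point is that $\nu_I$ is bounded for every $I\in\ker\Psi$. First, if $J$ is a nonzero proper \emph{integral} ideal with $\nu_J\equiv 0$ on $\critx{n}(D)$, then $\V(J)\cap\critx{n}(D)=\emptyset$, hence $\V(J)\cap\critx{\omega}(D)=\emptyset$ because $\critx{\omega}(D)\subseteq\critx{n}(D)$ (Proposition~\ref{prop:critx}), and so $J$ is bounded by Proposition~\ref{prop:idbounded} (its implication from (i) to (ii), which needs no finite-generation hypothesis). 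For a general $I\in\ker\Psi$, decompose $\nu_I=\sup\{\nu_I,0\}-\sup\{-\nu_I,0\}$; as $I$ is invertible, $\nu_{(D:I)}=-\nu_I$, and since $\nu_{A\cap B}=\sup\{\nu_A,\nu_B\}$ we get $\sup\{\nu_I,0\}=\nu_{I\cap D}$ and $\sup\{-\nu_I,0\}=\nu_{(D:I)\cap D}$. Both $I\cap D$ and $(D:I)\cap D$ are nonzero integral ideals whose ideal functions vanish on $\critx{n}(D)$ (as $\nu_I$ does); hence, unless one of them equals $D$ --- in which case the corresponding function is identically $0$ --- they are bounded by the integral case, and therefore $\nu_I$ is bounded.

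To conclude, observe that $I\mapsto\nu_I$ is a group homomorphism $\Inv(D)\longrightarrow\insZ^{\mmax}$ (because $\nu_{IJ}=\nu_I+\nu_J$) and is injective, since $I=\bigcap_M ID_M$ and $ID_M$ is determined by $\nu_I(M)$. By the previous paragraph it restricts to an injective homomorphism of $\ker\Psi$ into the group of bounded functions $\mmax\longrightarrow\insZ$; the latter is free by \cite[Satz~1]{nobeling}, and therefore so is its subgroup $\ker\Psi$. I expect the only real obstacle to be the boundedness claim, and within it the passage from integral to non-integral invertible ideals of the kernel: the identification of ``$\nu_J\equiv 0$ on $\critx{n}(D)$'' with ``$\V(J)\cap\critx{n}(D)=\emptyset$'' is available only for integral $J$, so one genuinely needs the $\sup$-decomposition through $I\cap D$ and $(D:I)\cap D$, together with a check of the degenerate cases. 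The reduction to $\gamma=0$ is bookkeeping with the definitions, and the injectivity of $I\mapsto\nu_I$ as well as the freeness of the group of bounded functions are standard.
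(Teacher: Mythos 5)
Your proof is correct and takes essentially the same route as the paper: reduce to the case $\gamma=0$, show that every invertible ideal in the kernel has bounded ideal function (via the implication (i)~$\Rightarrow$~(ii) of Proposition~\ref{prop:idbounded}, which requires no finite generation), and embed the kernel through $I\mapsto\nu_I$ into the group of bounded $\insZ$-valued functions, which is free by N\"obeling's theorem. The only (harmless) divergence is in handling non-integral ideals of the kernel, where the paper writes $I=JL^{-1}$ with $J,L$ proper ideals whose zero sets avoid $\critx{\omega}(D)$, while you use the lattice decomposition through $I\cap D$ and $(D:I)\cap D$; both work, and your version is self-contained.
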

\begin{proof}
By definition, $\phantom{}^nT_{\gamma+1}$ is just the first element of the sequence of overrings if we use $\phantom{}^nT_\gamma$ instead of $D$ as a starting point. Hence it is enough to prove the claim for $\gamma=0$, i.e., for the extension $\Psi^{(n)}:\Inv(D)\longrightarrow\Inv(\phantom{}^nT_1)$. Moreover, $\phantom{}^nT_1$ is contained in $\phantom{}^\omega T_1$ (since $\critx{\omega}(D)\subseteq\critx{n}(D)$ for every $n$) and thus the kernel of $\Psi^{(n)}$ is contained in the kernel of $\Psi^{(\omega)}$. Since a subgroup of a free abelian group is free, it is enough to show the statement for $n=\omega$.

The kernel of $\Psi^{(\omega)}$ is the set of invertible ideals $I$ such that $ID_M=D_M$ for every $M\in\critx{\omega}(D)$. If $I\subseteq D$ then by Proposition \ref{prop:idbounded}, this condition is equivalent to $I$ being bounded. If $I\nsubseteq D$, then by \cite[Proposition 3.3]{SP-scattered} we have $I=JL^{-1}$ where $J,L$ are proper ideals such that $\V(J)$ and $\V(L)$ are disjoint from $\critx{\omega}(D)$. Hence $\nu_I$ is bounded, since it is the difference of two bounded function. It follows that $\ker\Psi^{(\omega)}$ is isomorphic to a subgroup of the set of all bounded functions $\mmax\setminus\critx{\omega}(D)\longrightarrow \insZ$, which is free by \cite[Satz 1]{nobeling}. The claim is proved.
\end{proof}

The rest of the reasoning in \cite[Section 5]{SP-scattered} can be transposed to the case of $n$-critical ideals; hence, for every ordinal $\alpha$ we obtain an exact sequence
\begin{equation*}
0\longrightarrow\bigoplus_{i<\alpha}\phantom{}^n\Delta_i\longrightarrow\Inv(D)\longrightarrow\Inv(\phantom{}^nT_\alpha)\longrightarrow 0,
\end{equation*}
where $\phantom{}^n\Delta_i$ is the kernel of $\Inv(\phantom{}^nT_i)\longrightarrow\Inv(\phantom{}^nT_{i+1})$ and thus free by Proposition \ref{prop:freekernel}. In particular, if $\phantom{}^nT_\alpha=K$ (equivalently, if $\critx{n}_\alpha(D)=\emptyset$), then we have
\begin{equation*}
\Inv(D)\simeq\bigoplus_{i<\alpha}\phantom{}^n\Delta_i,
\end{equation*}
and thus $\Inv(D)$ is free. However, we shall see in the subsequent part of the paper that this fact does not improve the results in \cite{SP-scattered}, in the sense that we do not obtain the freeness of $\Inv(D)$ for any ``new'' class of rings.

\begin{ex}\label{ex:ncrit}
In general, it may be that $\critx{n}(D)\subsetneq\inscrit(D)$. We use the same example of \cite[Example 3.4.1]{fontana_factoring}: let $X_1,\ldots,X_n,\ldots$ be a countable set of indeterminates over a field $F$, let $Y_n:=\prod_{i\geq n+1}X_i^2$, and let
\begin{equation*}
D_n:=F[X_1,\ldots,X_n,Y_n]_{(Y_n)}\cap\bigcap_{i=1}^nF[X_1,\ldots,X_n,Y_n]_{(X_i)}
\end{equation*}
and $D:=\bigcup_nD_n$. Then, $D$ is an almost Dedekind domain with quotient field $F(Y_0,X_1,\ldots,X_n,\ldots)$ and whose maximal ideals are the principal ideals $M_k:=X_kD$ and the (non-finitely generated) ideal $M$ generated by all the $Y_k$.

Then, $\inscrit(D)=\{M\}$; however, $\nu_{Y_t}(M)=1$ and $\nu_{Y_t}(M_k)=2$ if $t\geq k$. In particular, $M$ is critical but not $2$-critical, so that $\critx{2}(D)=\emptyset$.

The same example can be slightly modified to obtain different sets of critical ideals.

If we define $Y_n:=\prod_{i\geq n+1}X_i^r$ (that is, using the constant exponent $r$ instead of $2$, but leaving the rest of the construction unchanged) one similarly has that $M$ is critical, $2$-critical, \ldots, $(r-1)$-critical, but not $r$-critical. Thus $\critx{n}(D)=\{M\}$ if $n<r$ and $\critx{n}(D)=\emptyset$ if $n\geq r$ or $n=\omega$.

If we define $Y_n:=\prod_{i\geq n+1}X_i^i$ (as in \cite[Example 3.4.2]{fontana_factoring}) then the principal ideal $Y_0D$ is unbounded. Hence $\inscrit(D)=\critx{n}(D)=\critx{\omega}(D)=\{M\}$ for every $M$.
\end{ex}

\begin{quest}
Is there an almost Dedekind domain $D$ such that $\critx{n+1}(D)\subsetneq\critx{n}(D)$ for every $n\inN$?
\end{quest}

\section{Anti-SP domains}\label{sect:anti-SP}
\begin{defin}
Let $D$ be an almost Dedekind domain. We say that $D$ is an \emph{anti-SP domain} if $\Max(D)=\inscrit(D)$.
\end{defin}

By \cite[Proposition 3.10]{SP-scattered}, this condition is equivalent to the fact that no finitely generated ideal has radical factorization; thus, anti-SP domains are almost Dedekind domains that are ``as far as possible'' from being SP-domains. Under this condition, the procedure used in \cite[Section 5]{SP-scattered} to study $\Inv(D)$ fails at the first stage; one could hope that using $n$-bounded or $\omega$-bounded ideals one can improve the results in \cite{SP-scattered}. This is not true, as the next proposition shows.
\begin{prop}\label{prop:inscrit-full}
Let $D$ be an almost Dedekind domain. Then, the following are equivalent:
\begin{enumerate}[(i)]
\item\label{prop:inscrit-full:full} $\inscrit(D)=\Max(D)$;
\item\label{prop:inscrit-full:n} $\critx{n}(D)=\Max(D)$ for some $n\inN$;
\item\label{prop:inscrit-full:nall} $\critx{n}(D)=\Max(D)$ for all $n\inN$;
\item\label{prop:inscrit-full:critx} $\critx{\omega}(D)=\Max(D)$;
\item\label{prop:inscrit-full:unbounded} no finitely generated ideal is bounded.
\end{enumerate}
\end{prop}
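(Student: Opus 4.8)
The proof has a short formal part together with one substantive implication, for which I take (i)$\,\Rightarrow\,$(v). The formal part rests only on Proposition \ref{prop:critx}: from $\critx{1}(D)=\inscrit(D)$, the descending chain $\critx{n+1}(D)\subseteq\critx{n}(D)$, and $\critx{\omega}(D)=\bigcap_n\critx{n}(D)$ one obtains
\[
\critx{\omega}(D)\subseteq\critx{n}(D)\subseteq\inscrit(D)\subseteq\Max(D)\qquad\text{for every }n.
\]
These inclusions give (i)$\,\Leftrightarrow\,$(ii) (for (i)$\,\Rightarrow\,$(ii) take $n=1$), (iii)$\,\Leftrightarrow\,$(iv) and (iv)$\,\Rightarrow\,$(i) with no further work; and (iv)$\,\Leftrightarrow\,$(v) holds because ``$\critx{\omega}(D)=\Max(D)$'' says exactly that every maximal ideal is $\omega$-critical, i.e.\ contains no bounded finitely generated ideal, which --- a bounded ideal being proper and hence contained in a maximal ideal --- is precisely the statement that no finitely generated ideal is bounded. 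So it all reduces to proving (i)$\,\Rightarrow\,$(v) in contrapositive form: \emph{if $D$ has a bounded finitely generated ideal, then some maximal ideal of $D$ is not critical.}

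Assume $I$ is a bounded finitely generated ideal and put $n:=\sup\nu_I$ (finite, and $\ge 1$ because $I$ is proper). The engine is an ``$n$-th root'' trick: it suffices to produce a finitely generated ideal $I'$ with $\nu_{I'}=n\cdot\chi_U$ for some nonempty clopen set $U\subseteq\mmax$. Indeed, put $R:=\rad(I')=\bigcap_{M\in U}M$; then $\nu_R=\chi_U$, so $\nu_{R^n}=n\nu_R=n\chi_U=\nu_{I'}$, and since an ideal of an almost Dedekind domain is determined by its ideal function, $R^n=I'$. Now $I'$ is finitely generated and nonzero, hence invertible; from $R\cdot R^{n-1}=I'$ it follows that $R$ is invertible, hence finitely generated. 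So $R$ is a nonzero proper finitely generated radical ideal, and every maximal ideal belonging to $U=\V(R)$ contains $R$ and therefore is not critical.

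To build such an $I'$, I pass to the locus on which $\nu_I$ attains its maximum. This needs one easy semicontinuity fact: for $a\in D\setminus\{0\}$ and $k\ge 1$ the set $\{M\mid v_M(a)\ge k\}$ is open. (Given $M_0$ in it, pick $\pi\in M_0$ with $v_{M_0}(\pi)=1$; then $a/\pi^k\in D_{M_0}$, so $sa=\pi^kc$ in $D$ for some $c\in D$ and $s\in D\setminus M_0$, and on the clopen set $\D(sD)\cap\V(\pi D)\ni M_0$ one reads off $v_M(a)\ge k\,v_M(\pi)\ge k$.) Intersecting over the finitely many generators of $I$, the set $A:=\{M\mid\nu_I(M)\ge n\}$ is open, and $A=\{M\mid\nu_I(M)=n\}$ is nonempty since the supremum $n$ is attained (the values of $\nu_I$ are nonnegative integers $\le n$). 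Since $A\subseteq\V(I)$, the space $\V(I)$ is compact in the constructible topology (being homeomorphic to $\Spec(D/I)$), and $\mmax$ has a basis of clopen sets, I can choose a nonempty clopen $U\subseteq A$; being closed in the compact set $\V(I)$, $U$ is compact. A clopen compact subset of $\mmax$ is a finite union of basic opens $\V(K_i)$ with $K_i$ finitely generated, hence equals $\V(\prod_iK_i)$; so $U=\V(K)$ for a finitely generated ideal $K$ (proper and nonzero unless $U=\mmax$). Finally set $I':=I+K^n$, with the convention $I':=I$ in the degenerate case $U=\mmax$ (where then $\nu_I\equiv n$). Because $\V(K^n)=\V(K)=U\subseteq\V(I)$ we have $\V(I')=U$, and for $M\in U$ we have $\nu_K(M)\ge 1$ and $\nu_I(M)=n$, so $\nu_{I'}(M)=\min\{\nu_I(M),n\,\nu_K(M)\}=n$. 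Thus $\nu_{I'}=n\chi_U$, as needed.

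The real obstacle is exactly the construction just carried out. Trying to extract a finitely generated radical ideal directly from a finitely generated bounded one is circular --- it is tantamount to asking that $\rad(I)$ be finitely generated --- and the circularity is broken only by first restricting to a clopen set on which $\nu_I$ is constant, where the ideal is a genuine power of a radical ideal and the latter's invertibility is automatic. The remaining steps (openness of the superlevel sets of $\nu_I$, and cutting down to a clopen compact locus) are routine point-set manipulations with the constructible topology on $\mmax$.
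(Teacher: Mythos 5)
Your proposal is correct, and its formal skeleton (the chain $\critx{\omega}(D)\subseteq\critx{n}(D)\subseteq\inscrit(D)$ plus the definitional equivalence of (iv) and (v)) matches the paper's; but your proof of the one substantive implication (i)$\Rightarrow$(v) is genuinely different. The paper argues purely ideal-theoretically: from a bounded finitely generated $I$ with $J^n\subseteq I$ for $J=\rad(I)$, it uses $(J^n)^v=((J^v)^n)^v$ together with the divisoriality of the invertible ideal $I$ to conclude $J^v\neq D$, hence $J\subseteq D\cap\alpha D\subsetneq D$ for some $\alpha$ in the quotient field; the ideal $L=D\cap\alpha D$ is then finitely generated, nonzero, proper and radical (because $\nu_L\leq\nu_J\leq 1$), so every maximal ideal over it is noncritical. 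You instead argue topologically: you show the superlevel sets of $\nu_{aD}$ are open (in effect a principal-ideal case of the semicontinuity the paper only proves afterwards, in Proposition \ref{prop:semicont}), cut the nonempty open locus $\{\nu_I=n\}$ down to a set $U=\V(K)$ with $K$ finitely generated, and extract $R=\rad(I+K^n)$ as an $n$-th root, whose invertibility --- and hence finite generation --- comes for free from $R^n=I+K^n$. Both routes are sound; the paper's is shorter and needs only the $v$-operation, while yours is more constructive (it exhibits an explicit clopen set of noncritical maximal ideals sitting over the maximum locus of $\nu_I$) and anticipates the semicontinuity machinery that drives the rest of Section \ref{sect:anti-SP}. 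One small simplification: the compactness detour used to write $U=\V(K)$ is unnecessary, since the sets $\V(K)$ with $K$ finitely generated are stable under finite intersections and therefore already form a basis (not merely a subbasis) of the inverse topology on $\mmax$, so a nonempty basic open of this form inside $\{\nu_I=n\}$ can be chosen directly.
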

\begin{proof}
\ref{prop:inscrit-full:critx} and \ref{prop:inscrit-full:unbounded} are equivalent by definition.

\ref{prop:inscrit-full:critx} $\Longrightarrow$ \ref{prop:inscrit-full:nall} $\Longrightarrow$ \ref{prop:inscrit-full:n} $\Longrightarrow$\ref{prop:inscrit-full:full} follows from the containments $\critx{\omega}(D)\subseteq\critx{n}(D)\subseteq\inscrit(D)$.

\ref{prop:inscrit-full:full} $\Longrightarrow$ \ref{prop:inscrit-full:unbounded} Suppose $I$ is bounded, and let $J:=\rad(I)$; then, $J^n\subseteq I$ for some $n\inN$.  Hence, $(J^n)^v\subseteq I^v=I$; however,
\begin{equation*}
(J^n)^v=(J\cdots J)^v=(J^v\cdots J^v)^v=((J^v)^n)^v
\end{equation*}
and thus $J^v\neq D$. Therefore, there is an $\alpha\in K$ such that $J\subseteq D\cap\alpha D\subsetneq D$. This implies that $L:=D\cap\alpha D$ is a finitely generated ideal that is also radical (since it contains the radical ideal $J$); in particular, every maximal ideal containing $L$ is not critical, and $\inscrit(D)\neq\Max(D)$.
\end{proof}

We now want to show that there are actually no anti-SP domains.
\begin{prop}\label{prop:semicont}
Let $D$ be an almost Dedekind domain and let $I$ be a proper finitely generated ideal of $D$. Then, $\nu_I$ is lower semicontinuous, with respect to the constructible topology.
\end{prop}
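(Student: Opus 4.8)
The plan is to establish lower semicontinuity by showing that, for every $n\inN$, the set $U_n:=\{M\in\mmax\mid\nu_I(M)\geq n\}$ is open in the constructible topology on $\mmax$. Since $\nu_I$ is $\insZ$-valued and $I\subseteq D$ forces $\nu_I\geq 0$, the preimage under $\nu_I$ of an open ray $(r,+\infty)$ is either all of $\mmax$ (when $r<0$) or one of the $U_n$ with $n\geq 1$, so this suffices. Fix then $n\geq 1$ and a point $M\in U_n$; the goal is to produce a subbasic open neighbourhood of $M$ contained in $U_n$.

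First I would rephrase the condition $\nu_I(M)\geq n$ as a containment of ideals of $D$. Since $\nu_I(M)=v_M(I)$, saying $\nu_I(M)\geq n$ is the same as saying that every element of $I$ has $v_M$-value at least $n$, i.e. that $I\subseteq (MD_M)^n=M^nD_M$. I claim this is in turn equivalent to $I\subseteq M^n$: one direction is trivial, and for the other one checks the inclusion $I\subseteq M^n$ locally at every maximal ideal. At $M$ it reads $ID_M\subseteq M^nD_M$, which is the hypothesis; at any maximal ideal $P\neq M$ we have $M\nsubseteq P$, hence $MD_P=D_P$, so $M^nD_P=D_P\supseteq ID_P$. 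Therefore, for every nonzero ideal $I\subseteq D$ and every $n$, one has $\nu_I(M)\geq n\iff I\subseteq M^n$.

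The second and crucial step is to ``spread'' the containment $I\subseteq M^n$ to a whole neighbourhood of $M$, and this is where finite generation of $I$ is used. Write $I=(c_1,\dots,c_s)$. The ideal $M^n$ is generated by the products $m_1\cdots m_n$ with all $m_i\in M$, so each $c_j$ is a $D$-combination of finitely many such products; letting $L_j\subseteq M$ be the finitely generated ideal generated by the finitely many elements of $M$ occurring in such an expression of $c_j$, we get $c_j\in L_j^n$, and hence $I\subseteq L^n$ where $L:=L_1+\cdots+L_s$ is a finitely generated ideal contained in $M$. Now $\V(L)$ is a subbasic open set for the inverse topology on $\Max(D)$—which coincides with the constructible topology—and $M\in\V(L)$. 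Finally, for any $M'\in\V(L)$ we have $L\subseteq M'$, so $I\subseteq L^n\subseteq (M')^n$, and thus $\nu_I(M')\geq n$ by the equivalence of the previous step. Hence $\V(L)\subseteq U_n$, which proves that $U_n$ is open.

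I expect the only genuinely delicate point to be the equivalence $\nu_I(M)\geq n\iff I\subseteq M^n$—essentially the fact that the symbolic and ordinary powers of a maximal ideal coincide—but as indicated this requires nothing beyond $D_M$ being a DVR together with a one-line local verification, so I do not anticipate a serious obstacle. (An essentially equivalent route would be to first reduce to the case of a principal ideal via $\nu_I=\inf_i\nu_{c_iD}$ and the fact that an infimum of finitely many lower semicontinuous functions is lower semicontinuous, and then run the same argument for $I=cD$; the direct treatment above already handles the finitely generated case without this detour.)
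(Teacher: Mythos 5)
Your proof is correct, and it takes a genuinely different route from the one in the paper. You show directly that each superlevel set $U_n=\nu_I^{-1}([n,+\infty))$ is open, by first converting the pointwise condition $v_M(I)\geq n$ into the ideal-theoretic containment $I\subseteq M^n$ (your local verification of this is exactly the standard fact that $M^n$ is $M$-primary, hence contracted from $D_M$, so the symbolic and ordinary powers agree for a maximal ideal), and then using finite generation of $I$ to descend to a finitely generated $L\subseteq M$ with $I\subseteq L^n$, so that $\V(L)$ is a subbasic inverse-open neighbourhood of $M$ inside $U_n$. The paper instead proves that the sublevel sets $\nu_I^{-1}([0,n])$ are closed: it sets $J:=\rad(I)$ and considers the fractional ideal $L:=I^{-1}J^{n+1}$, computes $v_M(L)=(n+1)-v_M(I)$ on $\V(J)$ (using that $J$ is radical, so $v_M(J)=1$ there), identifies $\nu_I^{-1}([1,n])$ with the Zariski-closed set $\V(L\cap D)$, and then invokes the coincidence of the Zariski, inverse and constructible topologies on the compact set $\V(I)$. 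Your argument buys elementarity and self-containedness: it needs nothing beyond the fact that $D_M$ is a DVR and the local--global principle for ideal containments, avoiding both the auxiliary fractional ideal and the topological comparison lemma. The paper's argument, on the other hand, packages the superlevel/sublevel sets as genuine Zariski-closed sets $\V(\cdot)$ of explicit ideals, which is in the spirit of the rest of its toolkit. Both are complete proofs; no gap in yours.
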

\begin{proof}
We need to show that $\nu_I^{-1}([0,n])=\{M\in\Max(D)\mid v_M(I)\leq n\}$ is closed for every $n\inN$. We first note that $\nu_I^{-1}(0)=\D(I)$ is closed in the constructible topology since $I$ is finitely generated; hence, we only need to consider $X_n:=\nu_I^{-1}([1,n])$.

Let $J:=\rad(I)$ and let $L:=I^{-1}J^{n+1}$. Take a maximal ideal $M$: if $M\notin \V(J)$, then $v_M(L)=0$ since $v_M(I)=0=v_M(J)$. On the other hand, if $M\in \V(J)$, then $v_M(L)=(n+1)v_M(J)-v_M(I)=(n+1)-v_M(I)$. Hence, $v_M(L)>0$ if and only if $v_M(I)<n+1$, i.e., if and only if $M\in X_n$. It follows that $\V(L\cap D)=X_n$ is closed in the Zariski topology. However, the Zariski, inverse and constructible topology agree on $\V(I)$, and thus (since $X_n\subseteq\V(I)$ and $\V(I)$ is closed in $\mmax$) it follows that $X_n$ is also closed in the constructible topology. Thus, $\nu_I^{-1}([0,n])=\D(I)\cup X_n$ is closed in the constructible topology.
\end{proof}

\begin{lemma}\label{lemma:dense}
Let $D$ be an anti-SP domain with nonzero Jacobson radical. Let $I\subseteq\Jac(D)$ be a nonzero finitely generated ideal. For every $n\inN$, the set 
\begin{equation*}
Y_n:=\nu_I^{-1}((n,+\infty))=\{M\in\Max(D)\mid v_M(I)>n\}
\end{equation*}
is dense in $\mmax$.
\end{lemma}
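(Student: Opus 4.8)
\emph{Proof proposal.} The plan is to argue by contradiction: from a ``gap'' of $Y_n$ I will manufacture a finitely generated, proper, $n$-bounded ideal, which an anti-SP domain cannot possess by Proposition~\ref{prop:inscrit-full}.

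First I would invoke the hypothesis $\Jac(D)\neq 0$: then $\mmax$ is compact and its constructible, inverse and Zariski topologies all coincide, and $\mmax$ has a basis of clopen sets. So if $Y_n$ were not dense, I could pick a nonempty open set disjoint from $\overline{Y_n}$ and shrink it to a nonempty \emph{clopen} set $U\subseteq\mmax$ with $v_M(I)\leq n$ for every $M\in U$.

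The next step is to realize such a clopen $U$ as $\V(a)$ for a single element $a\in D$. Both $U$ and $\mmax\setminus U$ are Zariski-closed, say $U=\V(\mathfrak a)$ and $\mmax\setminus U=\V(\mathfrak b)$ for ideals $\mathfrak a,\mathfrak b$ of $D$; since these are disjoint, $\mathfrak a+\mathfrak b$ lies in no maximal ideal, so $\mathfrak a+\mathfrak b=D$ and $1=a+b$ with $a\in\mathfrak a$, $b\in\mathfrak b$. Then $\V(a)=U$, because $\V(a)\supseteq\V(\mathfrak a)=U$ while $\V(a)\cap\V(b)=\emptyset$ and $\V(b)\supseteq\mmax\setminus U$. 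Now set $J:=I+aD$. It is finitely generated, and $\V(J)=\V(I)\cap\V(a)=\mmax\cap U=U\neq\emptyset$ (here I use $I\subseteq\Jac(D)$, so that $\V(I)=\mmax$); hence $J$ is a nonzero proper ideal.

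It then remains to bound $\nu_J$, using $\nu_{I+aD}=\inf\{\nu_I,\nu_{aD}\}$: if $M\notin U$ then $a\notin M$, so $\nu_{aD}(M)=0$ and thus $\nu_J(M)=0$; if $M\in U$ then $\nu_J(M)\leq\nu_I(M)\leq n$. Hence $\sup\nu_J\leq n$, so $J$ is an $n$-bounded finitely generated proper ideal, contradicting Proposition~\ref{prop:inscrit-full} (an anti-SP domain has no bounded finitely generated ideal). Therefore $Y_n$ is dense. The only step that needs a little care is expressing a clopen subset of $\mmax$ as $\V(a)$ for a single element; everything else is a direct computation with ideal functions, the crux being the elementary observation that adjoining the generator $a$ to $I$ kills $\nu_I$ off $U$ without increasing it on $U$.
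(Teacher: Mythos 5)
Your proof is correct and follows essentially the same route as the paper: assume a gap in $Y_n$, add to $I$ an ideal cutting out that gap, and derive an $n$-bounded finitely generated proper ideal contradicting Proposition~\ref{prop:inscrit-full}. The only difference is your detour realizing the clopen set as $\V(a)$ for a single element $a$, which is unnecessary: the sets $\V(J)$ with $J$ finitely generated already form a basis of open sets of $\mmax$, so one can take $J+I$ directly as the paper does.
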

\begin{proof}
Suppose that $Y_n$ is not dense: then, there is a finitely generated ideal $J$ such that the clopen set $\V(J)$ does not meet the closure of $Y_n$. Let $J':=J+I$: then, $J'\neq D$ since every maximal ideal containing $J$ contains also $J'$ (since $I\subseteq\Jac(D)$). Let $M\in\mmax$: then, $v_M(J')=0$ if $M\in Y_n$ (since $M\notin\V(J)$), while $v_M(J')\leq v_M(I)\leq n$ if $M\notin Y_n$. Hence, $\nu_{J'}$ is bounded by $n$, against the fact that $D$ is anti-SP. Thus $Y_n$ is dense in $\mmax$.
\end{proof}

\begin{teor}\label{teor:exist-noncrit}
Let $D$ be an almost Dedekind domain that is not a field. Then, $\inscrit(D)\neq\Max(D)$. In particular, no almost Dedekind domain is an anti-SP domain.
\end{teor}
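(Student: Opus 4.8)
The plan is to argue by contradiction, running a Baire category argument inside a compact clopen piece of the maximal space. So suppose $D$ is an almost Dedekind domain, not a field, with $\inscrit(D)=\Max(D)$; by Proposition~\ref{prop:inscrit-full} this means that $D$ is an anti-SP domain, equivalently that no finitely generated proper ideal of $D$ is bounded. Since $D$ is not a field, fix a nonzero non-unit $d\in D$ and put $X:=\V(dD)\subseteq\mmax$. As $dD$ is finitely generated, $\V(dD)$ is closed in the constructible topology of $\Spec(D)$, which is compact and Hausdorff; and since $D$ is one-dimensional and $d\neq 0$, every prime of $D$ containing $dD$ is maximal, so $\V(dD)\subseteq\Max(D)$ and the topology just described is the subspace topology of $X$ in $\mmax$. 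Hence $X$ is a compact Hausdorff space, and in particular a Baire space.

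For $n\inN$ set $Y_n:=\nu_{dD}^{-1}((n,+\infty))$; since $v_M(d)>n\geq 0$ forces $d\in M$, we have $Y_n\subseteq X$. First I would record that $Y_n$ is open: by Proposition~\ref{prop:semicont} the function $\nu_{dD}$ is lower semicontinuous, so $\nu_{dD}^{-1}([0,n])$ is closed, and $Y_n$ is its complement. The heart of the matter is to show that each $Y_n$ is \emph{dense} in $X$; this reproduces the argument of Lemma~\ref{lemma:dense} but relative to the clopen set $X$ instead of to all of $\mmax$. Recall that the sets $\V(I)$, with $I$ a finitely generated ideal of $D$, form a basis of clopen sets of $\mmax$; hence the sets $\V(J)$, with $J$ finitely generated and $dD\subseteq J$, form a basis of $X$. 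If $Y_n$ were not dense in $X$, there would be such a $J$ with $\V(J)$ nonempty (so $J\subsetneq D$) and $\V(J)\cap\overline{Y_n}=\emptyset$. I claim $v_M(J)\leq n$ for every $M\in\mmax$: if $M\in Y_n$ then $J\nsubseteq M$, so $v_M(J)=0$; if $M\in X\setminus Y_n$ then $v_M(J)\leq v_M(d)\leq n$, since $d\in J$ and $v_M(d)\leq n$ by definition of $Y_n$; and if $M\notin X$ then $d\notin M$, so $v_M(J)\leq v_M(d)=0$. Therefore $J$ would be a bounded finitely generated proper ideal of $D$, contradicting that $D$ is anti-SP. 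So $Y_n$ is dense in $X$.

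To conclude, $\{Y_n\mid n\inN\}$ is a countable family of dense open subsets of the Baire space $X$, so $\bigcap_{n\inN}Y_n\neq\emptyset$; but an $M$ in this intersection would satisfy $v_M(d)>n$ for every $n$, which is impossible since $v_M(d)$ is a finite nonnegative integer. This contradiction yields $\inscrit(D)\neq\Max(D)$, and the last assertion follows because an almost Dedekind domain is never a field (its localizations being discrete valuation rings). The step that I expect to require the most care is the density of $Y_n$ in $X$: one has to be certain that the clopen subsets of $X$ are cut out by finitely generated ideals of $D$ containing $dD$, so that the elementary estimate $\nu_J\leq n$ can be carried out exactly as in Lemma~\ref{lemma:dense}. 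A variant that uses Lemma~\ref{lemma:dense} as stated is to first replace $D$ by the overring $T:=\bigcap\{D_M\mid M\in\V(dD)\}$ — an almost Dedekind domain (being an overring of one) whose maximal space is homeomorphic to $X$ by \cite[Lemma 2.3]{SP-scattered} and whose Jacobson radical contains $d$ — and to check that $T$ is again anti-SP (the nontrivial point being that a finitely generated ideal of $T$ is extended from a finitely generated fractional ideal of $D$, together with the fact that finitely generated ideals of the Pr\"ufer domain $D$ are closed under intersection); this reduces the statement to the case $\Jac(D)\neq0$, which is covered directly by Lemma~\ref{lemma:dense}, Proposition~\ref{prop:semicont}, and the Baire category theorem.
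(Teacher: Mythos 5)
Your proof is correct, and it runs on the same engine as the paper's argument --- Proposition \ref{prop:semicont} for the openness of the sets $\nu_{dD}^{-1}((n,+\infty))$, a density claim whose failure would produce a bounded finitely generated proper ideal, and the Baire category theorem --- but it organizes that engine differently. The paper splits into two cases: when $\Jac(D)\neq(0)$ the space $\mmax$ itself is compact and Lemma \ref{lemma:dense} plus Baire apply directly; when $\Jac(D)=(0)$ it passes to the overring $T=(1+I)^{-1}D$, produces a bounded finitely generated ideal $J$ of $T$ by the first case, and transfers it back to $D$ as $L=J_0+I$. You instead run the whole argument uniformly inside the compact Hausdorff clopen subspace $X=\V(dD)$, reproving the density of $Y_n$ \emph{relative to} $X$: you correctly identify the basic clopen subsets of $X$ as the $\V(J)$ with $J$ finitely generated and containing $d$, and your three-way estimate ($M\in Y_n$, $M\in X\setminus Y_n$, $M\notin X$) does show that a nonempty basic set missing $Y_n$ yields an $n$-bounded proper $J$, contradicting Proposition \ref{prop:inscrit-full}. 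What this buys is a single case-free argument with no ring-theoretic transfer between $D$ and an overring; the cost is that Lemma \ref{lemma:dense} cannot be quoted as stated and its short proof must be redone relative to $X$, which you do. The only quibble is the closing aside that an almost Dedekind domain is never a field: the theorem's own hypothesis suggests the paper's convention admits fields degenerately, but this affects nothing of substance.
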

\begin{proof}
Suppose first that $\Jac(D)\neq(0)$, and suppose that $\inscrit(D)=\Max(D)$: then, by Proposition \ref{prop:inscrit-full}, $\nu_I$ is not bounded for every proper finitely generated ideal of $D$.

Let $I\subseteq\Jac(D)$ be a finitely generated ideal of $D$, and let $Y_n:=\nu_I^{-1}((n,+\infty))=\{M\in\Max(D)\mid v_M(I)>n\}$. By Proposition \ref{prop:semicont}, each $Y_n$ is open in $\mmax$, and by Lemma \ref{lemma:dense} they are dense in $\mmax$. Therefore, $\{Y_n\}_{n\inN}$ is a family of open and dense subsets of the compact Hausdorff space $\mmax$; by the Baire category theorem, its intersection must be dense, and in particular nonempty. However, if $M\in\bigcap_nY_n$, then $v_M(I)>n$ for every $n$; since this cannot happen, it follows that we cannot have $\inscrit(D)=\Max(D)$.

Suppose now that $\Jac(D)=(0)$, and let $I$ be a finitely generated ideal. Then, $T:=(1+I)^{-1}D=\bigcap\{D_M\mid M\in\V(I)\}$ is an almost Dedekind domain with nonzero Jacobson radical (indeed, $I\subseteq\Jac(T)$) and thus there is a proper finitely generated ideal $J$ of $T$ such that $\nu_J:\mmax_T\longrightarrow\insN$ is bounded. Let $J_0$ be a finitely generated ideal of $D$ such that $J=J_0T$, and let $L:=J_0+I$. Then, $\nu_L(M)=v_M(L)=0$ if $M\notin\V(I)$, while $\nu_L(M)=v_M(L)\leq v_{MT}(J)=\nu_J(MT)$ if $M\in\V(I)$. Furthermore, $L\neq D$ since $LT$ is contained in every maximal ideal of $T$ containing $J$. Therefore, $\nu_L$ is a nonzero bounded function. By Proposition \ref{prop:inscrit-full}, $\inscrit(D)\neq\Max(D)$.
\end{proof}

The previous results only deal with finitely generated ideals; however, it would be interesting to know if we can also extend those results to \emph{principal} ideals. An almost Dedekind domain is said to be \cite{hasenauer-normsets}:
\begin{itemize}
\item \emph{bounded} if every principal ideal is bounded;
\item \emph{completely unbounded} if no proper principal ideal is bounded.
\end{itemize}

Bounded almost Dedekind domains are characterized by Corollary \ref{cor:SP-bounded}: $D$ is bounded if and only if $\critx{\omega}(D)=\emptyset$. The case of completely unbounded domains is more delicate; we cannot answer in general the question posed in \cite{hasenauer-normsets} on the existence of such domains, but we can answer negatively in two cases, the first of which is obvious in light of the previous results.
\begin{prop}
Let $D$ be a B\'ezout almost Dedekind domain. Then, $D$ is not completely unbounded.
\end{prop}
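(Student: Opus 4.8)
The plan is to reduce the statement directly to Theorem~\ref{teor:exist-noncrit} through Proposition~\ref{prop:inscrit-full}; as the surrounding discussion indicates, this case is essentially immediate once the existence of noncritical maximal ideals is available.

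First I would observe that an almost Dedekind domain is never a field, since its localizations at maximal ideals are discrete valuation rings (and a fortiori one-dimensional); hence Theorem~\ref{teor:exist-noncrit} applies and yields $\inscrit(D)\neq\Max(D)$. By the equivalence of conditions \ref{prop:inscrit-full:full} and \ref{prop:inscrit-full:unbounded} in Proposition~\ref{prop:inscrit-full}, this says precisely that $D$ admits a bounded (nonzero, proper) finitely generated ideal $I$. Concretely, one may take any maximal ideal $M\notin\inscrit(D)$ together with a nonzero radical finitely generated ideal $I\subseteq M$, so that $\nu_I$ is the characteristic function of $\V(I)$ and is in particular bounded by $1$.

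Then I would invoke the B\'ezout hypothesis: every finitely generated ideal of $D$ is principal, so the ideal $I$ above has the form $aD$ for some $a\in D$ that is nonzero and a nonunit (a nonunit because $I$ is proper). Since $\nu_{aD}=\nu_I$ is bounded, $aD$ is a bounded proper principal ideal, and therefore $D$ fails to be completely unbounded. I do not foresee any genuine obstacle here: the whole content is carried by Theorem~\ref{teor:exist-noncrit}, and the B\'ezout condition enters only to upgrade ``bounded finitely generated ideal'' to ``bounded principal ideal''.
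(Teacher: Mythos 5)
Your proposal is correct and follows essentially the same route as the paper: Theorem~\ref{teor:exist-noncrit} produces a noncritical maximal ideal, hence a bounded (indeed radical) nonzero proper finitely generated ideal, which the B\'ezout hypothesis turns into a bounded proper principal ideal. The detour through Proposition~\ref{prop:inscrit-full} is harmless but unnecessary, since a radical finitely generated ideal contained in a noncritical maximal ideal already does the job directly.
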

\begin{proof}
If $D$ is B\'ezout, every finitely generated ideal is principal. The claim follows from Theorem \ref{teor:exist-noncrit}.
\end{proof}

\begin{prop}\label{prop:compl-unbound-jac}
Let $D$ be an almost Dedekind domain with $\Jac(\phantom{}^{\omega}T_1)\neq(0)$. Then, $D$ is not completely unbounded.
\end{prop}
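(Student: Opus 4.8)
The plan is to produce a single non-unit $x\in D$ whose ideal function $\nu_{xD}$ vanishes on all of $\critx{\omega}(D)$; then $\V(xD)\cap\critx{\omega}(D)=\emptyset$, so the proper principal ideal $xD$ is bounded by Proposition \ref{prop:idbounded}, and $D$ is not completely unbounded. Since $\Jac(\phantom{}^{\omega}T_1)\neq(0)$ we have $\phantom{}^{\omega}T_1\neq K$, hence $\critx{\omega}(D)\neq\emptyset$ and $D$ is not a field; moreover, since the maximal ideals of $\phantom{}^{\omega}T_1$ are the extensions $N\phantom{}^{\omega}T_1$ of the ideals $N\in\critx{\omega}(D)$ and contract back to them, one gets $\mathfrak{J}:=\Jac(\phantom{}^{\omega}T_1)\cap D=\bigcap_{N\in\critx{\omega}(D)}N$. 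If $0\neq b/c\in\Jac(\phantom{}^{\omega}T_1)$ with $b,c\in D$, then $b=c\cdot(b/c)\in\Jac(\phantom{}^{\omega}T_1)\cap D$, so $\mathfrak{J}\neq(0)$; also $\mathfrak{J}\supseteq\Jac(D)$ because $\critx{\omega}(D)\subseteq\Max(D)$. The element $x$ will have the shape $x=1+ta$ with $t\in\mathfrak{J}$ and $a\in D$: for every $N\in\critx{\omega}(D)$ we have $t\in N$, hence $x\equiv 1\pmod N$ and $x\notin N$, which is exactly what is needed.

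The crux is to arrange that $x=1+ta$ is a \emph{non-unit} of $D$. By the usual description of the Jacobson radical (namely $r\in\Jac(D)$ iff $1+rs$ is a unit for every $s\in D$), a suitable $a$ exists as soon as $t\notin\Jac(D)$; so it suffices to find $t\in\mathfrak{J}\setminus\Jac(D)$, i.e.\ to prove $\mathfrak{J}\nsubseteq\Jac(D)$. I would argue this by contradiction. If $\mathfrak{J}\subseteq\Jac(D)$ then $\mathfrak{J}=\Jac(D)$, which is nonzero; hence $\Max(D)$ is compact and on it the constructible topology coincides with the Zariski topology. As $\critx{\omega}(D)$ is closed in the constructible topology (Proposition \ref{prop:critx}), it is then Zariski-closed in $\Max(D)$, so a maximal ideal contains $\mathfrak{J}=\bigcap_{N\in\critx{\omega}(D)}N$ if and only if it lies in $\critx{\omega}(D)$. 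But $\mathfrak{J}=\Jac(D)$ is contained in every maximal ideal of $D$, forcing $\critx{\omega}(D)=\Max(D)$, which contradicts Theorem \ref{teor:exist-noncrit}.

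Granting $\mathfrak{J}\nsubseteq\Jac(D)$, I pick a maximal ideal $M_0$ with $\mathfrak{J}\nsubseteq M_0$, choose $t\in\mathfrak{J}\setminus M_0$ (automatically nonzero, and a non-unit since $\Jac(\phantom{}^{\omega}T_1)$ is a proper ideal), and then $a\in D$ with $x:=1+ta$ a non-unit of $D$. By the first paragraph $\V(xD)\cap\critx{\omega}(D)=\emptyset$, so $xD$ is a proper bounded principal ideal by Proposition \ref{prop:idbounded}, and $D$ is not completely unbounded. The one genuinely delicate point is the contradiction in the middle paragraph --- ruling out the degenerate equality $\Jac(\phantom{}^{\omega}T_1)\cap D=\Jac(D)$ --- which is precisely where the compactness of $\Max(D)$ (hence the coincidence of the three topologies) and Theorem \ref{teor:exist-noncrit} are used; everything else is formal.
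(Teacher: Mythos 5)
Your proof is correct and follows essentially the same route as the paper: both arguments reduce to finding an element $t$ in $\bigcap\{N\mid N\in\critx{\omega}(D)\}=\Jac(\phantom{}^{\omega}T_1)\cap D$ but outside $\Jac(D)$, forming a nonunit $1+ta$ that then lies outside every $\omega$-critical ideal, and invoking Proposition \ref{prop:idbounded}. The only difference is cosmetic: where the paper asserts $\V(I)=\critx{\omega}(D)$ and deduces $I\neq\Jac(D)$ from Theorem \ref{teor:exist-noncrit}, you justify the same nondegeneracy claim more explicitly via the coincidence of the Zariski and constructible topologies when $\Jac(D)\neq(0)$ — a welcome elaboration, but not a different argument.
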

\begin{proof}
Let $I:=\bigcap\{P\mid P\in\critx{\omega}(D)\}$. Then, $I=\Jac(\phantom{}^{\omega}T_1)\cap D$, and thus $I\neq(0)$; moreover, $\V(I)=\critx{\omega}(D)$. By Theorem \ref{teor:exist-noncrit}, $\inscrit(D)\neq\mmax$, and thus $I\neq\Jac(D)$; let $b\in I\setminus\Jac(D)$. Then, there is an $r\in D$ such that $a:=1-rb$ is a nonunit of $D$; moreover, by construction, $a$ is not contained in any $\omega$-critical ideal. By Proposition \ref{prop:idbounded}, $\nu_{aD}$ is bounded. The claim is proved.
\end{proof}

\begin{cor}
Let $D$ be an almost Dedekind domain such that $|\inscrit(D)|<\infty$. Then, $D$ is not completely unbounded.
\end{cor}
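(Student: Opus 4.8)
The statement will follow from Proposition~\ref{prop:compl-unbound-jac} once we observe that finiteness of $\inscrit(D)$ forces the Jacobson radical of $\phantom{}^{\omega}T_1$ to be nonzero, apart from two degenerate situations. First I would reduce to the case in which $D$ is not a field (which we may assume, since a field has no nonzero proper principal ideals). Next, since $\critx{\omega}(D)\subseteq\critx{1}(D)=\inscrit(D)$ by Proposition~\ref{prop:critx}, the set $\critx{\omega}(D)$ is finite. If $\critx{\omega}(D)=\emptyset$, then Corollary~\ref{cor:SP-bounded} applies and every nonzero proper principal ideal of $D$ is bounded; since $D$ is not a field at least one such ideal exists, and hence $D$ is not completely unbounded.

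It remains to treat the case $\critx{\omega}(D)\neq\emptyset$. Here $\phantom{}^{\omega}T_1=\bigcap\{D_P\mid P\in\critx{\omega}(D)\}$ is a nontrivial overring of $D$; being a proper overring of the one-dimensional domain $D$, it is itself one-dimensional, and its maximal space is in natural bijection with $\critx{\omega}(D)$, hence finite. A finite topological space is compact, so by the characterization recalled in the preliminaries (for a one-dimensional ring $R$, $\mmax_R$ is compact if and only if $\Jac(R)\neq(0)$) we conclude $\Jac(\phantom{}^{\omega}T_1)\neq(0)$. Proposition~\ref{prop:compl-unbound-jac} then gives that $D$ is not completely unbounded, as wanted.

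I expect no real obstacle here: the argument is short, and the only points requiring a little care are the two degenerate cases ($D$ a field, and $\critx{\omega}(D)$ empty, in which $\phantom{}^{\omega}T_1=K$ and Proposition~\ref{prop:compl-unbound-jac} does not apply directly) together with the verification that $\phantom{}^{\omega}T_1$ is one-dimensional, which is exactly what makes the compactness dichotomy available.
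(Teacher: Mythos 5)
Your proof is correct and follows essentially the same route as the paper: finiteness of $\inscrit(D)$ forces $\critx{\omega}(D)$ to be finite, hence $\phantom{}^{\omega}T_1$ is semilocal with nonzero Jacobson radical, and Proposition~\ref{prop:compl-unbound-jac} applies. Your explicit treatment of the degenerate cases ($D$ a field, $\critx{\omega}(D)=\emptyset$) is a sensible extra precaution, and the detour through compactness of $\mmax_{\phantom{}^{\omega}T_1}$ could be replaced by the more direct remark that a finite intersection of nonzero prime ideals in a domain is nonzero.
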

\begin{proof}
If $\inscrit(D)$ is finite, then so is $\critx{\omega}(D)$, and thus $\phantom{}^\omega  T_1$ is semilocal, and in particular its Jacobson radical is nonzero. The claim follows from Proposition \ref{prop:compl-unbound-jac}.
\end{proof}

\begin{cor}\label{cor:Jac-complunbounded}
Let $D$ be an almost Dedekind domain with $\Jac(D)\neq(0)$. Then, $D$ is not completely unbounded.
\end{cor}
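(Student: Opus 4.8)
The plan is to reduce the statement to Proposition \ref{prop:compl-unbound-jac}, i.e.\ to show that $\Jac(D)\neq(0)$ forces $\Jac(\phantom{}^{\omega}T_1)\neq(0)$. Before doing so, one degenerate case must be isolated, namely $\critx{\omega}(D)=\emptyset$; here $\phantom{}^{\omega}T_1$ is the quotient field of $D$ and Proposition \ref{prop:compl-unbound-jac} cannot be invoked directly. But in this case Corollary \ref{cor:SP-bounded} already gives that every nonzero proper principal ideal of $D$ is bounded, and since $D$ is one-dimensional it does have such ideals, so $D$ is not completely unbounded.

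Assume then $\critx{\omega}(D)\neq\emptyset$. The key observation is that, by the description of $\phantom{}^{\omega}T_1$ recalled in the preliminaries, the maximal ideals of $\phantom{}^{\omega}T_1$ are exactly the extensions $P\,\phantom{}^{\omega}T_1$ with $P\in\critx{\omega}(D)$, and each of these contains $P$, which in turn contains $\Jac(D)$ since $P$ is a maximal ideal of $D$. Therefore
\begin{equation*}
\Jac(D)\subseteq\bigcap\{P\mid P\in\critx{\omega}(D)\}\subseteq\bigcap\{P\,\phantom{}^{\omega}T_1\mid P\in\critx{\omega}(D)\}=\Jac(\phantom{}^{\omega}T_1),
\end{equation*}
so $\Jac(\phantom{}^{\omega}T_1)\neq(0)$, and Proposition \ref{prop:compl-unbound-jac} finishes the argument.

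I do not expect a genuine obstacle here: the substantive part --- producing a proper principal ideal that avoids every $\omega$-critical maximal ideal and is hence bounded --- is already contained in the proof of Proposition \ref{prop:compl-unbound-jac}, and what is left is the elementary containment of Jacobson radicals displayed above. The only point not to overlook is the case $\critx{\omega}(D)=\emptyset$, which genuinely occurs (for example for any semilocal Dedekind domain) and therefore has to be handled on its own.
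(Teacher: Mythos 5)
Your proof is correct and follows the same route as the paper, which simply observes that $\Jac(D)\neq(0)$ implies $\Jac(\phantom{}^{\omega}T_1)\neq(0)$ and then invokes Proposition \ref{prop:compl-unbound-jac}. You additionally (and rightly) isolate the degenerate case $\critx{\omega}(D)=\emptyset$, where $\phantom{}^{\omega}T_1$ is the quotient field and the proposition does not apply literally; the paper's one-line proof passes over this case, which your appeal to Corollary \ref{cor:SP-bounded} handles cleanly.
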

\begin{proof}
If $\Jac(D)\neq(0)$, then also $\Jac(\phantom{}^\omega T_1)\neq(0)$. The claim follows from Proposition \ref{prop:compl-unbound-jac}.
\end{proof}

Recall that a domain is \emph{atomic} if every nonzero nonunit can be written as a finite product of irreducible elements. 
\begin{cor}
Let $D$ be an almost Dedekind domain that is not Dedekind. If $D$ is atomic, then $\Jac(D)=(0)$.
\end{cor}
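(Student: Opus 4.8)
The plan is to argue by contradiction: suppose $D$ is atomic, almost Dedekind and not Dedekind, yet $\Jac(D)\neq(0)$, and let us try to conclude that $D$ is Dedekind. First, since $D$ is not Dedekind it cannot have the finite character property: an almost Dedekind domain with finite character is Dedekind, because for $0\neq x$ with $\V(xD)=\{M_1,\dots,M_k\}$ one has $xD=\bigcap_i(xD_{M_i}\cap D)$, a finite intersection of pairwise comaximal primary ideals, whence $D/xD\cong\prod_i D_{M_i}/xD_{M_i}$ is Artinian and every ideal containing $xD$ is finitely generated. So there is a nonzero $x\in D$ lying in infinitely many maximal ideals; writing $x=q_1\cdots q_s$ as a product of irreducibles (atomicity) gives $\V(xD)=\V(q_1D)\cup\cdots\cup\V(q_sD)$, so $\V(qD)$ is infinite for some irreducible factor $q$. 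Thus everything reduces to the following claim, which is where $\Jac(D)\neq(0)$ enters: \emph{if $D$ is almost Dedekind with $\Jac(D)\neq(0)$, then no irreducible element of $D$ is contained in infinitely many maximal ideals.} (This strengthens Corollary~\ref{cor:Jac-complunbounded}, which already rules out $D$ being completely unbounded.)

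To prove the claim I would assume $\V(qD)$ is infinite. Since $\Jac(D)\neq(0)$, the space $\mmax$ is compact, Hausdorff and zero-dimensional, and $\V(qD)$ is a clopen — hence compact — infinite subset of it, so it has a non-isolated point $M_0$, and $v_{M_0}(q)\geq1$. Choosing $\pi\in M_0$ with $v_{M_0}(\pi)=1$ and factoring $\pi$ into irreducibles, exactly one factor has $v_{M_0}$-value $1$ and it is irreducible; replacing $q$ by it, I may assume $v_{M_0}(q)=1$, with $M_0$ still non-isolated in $\mmax$ and hence in the clopen set $\V(qD)$, which is therefore still infinite. Now, because $M_0$ is non-isolated it lies in the inverse-topology closure of $\mmax\setminus\{M_0\}$, and (as $\bigcap_{M\in S}D_M$ depends only on the inverse-closure of $S$) this forces $D=\bigcap_{M\neq M_0}D_M$. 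From this one computes $qD:M_0=qD$: if $x\in D$ and $xM_0\subseteq qD$ then, taking for each $M\neq M_0$ an element of $M_0$ that is a unit at $M$, one gets $v_M(x)\geq v_M(q)$, so $x/q\in\bigcap_{M\neq M_0}D_M=D$. But if $M_0$ were finitely generated it would be invertible, so $qD:M_0=qM_0^{-1}D$, which has $v_{M_0}$-value $0$ and hence properly contains $qD$ — contradicting $qD:M_0=qD$. Therefore $M_0$ is a non-finitely-generated maximal ideal, and we are reduced to contradicting the mere existence of such an $M_0$ in an atomic almost Dedekind domain with $\Jac(D)\neq(0)$.

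For that last step the idea is to split $q$. Decompose $\V(qD)=A\sqcup B$ into nonempty clopen pieces with $M_0\in B$ and $\nu_{qD}$ bounded on $A$ (such an $A$ exists: every clopen subset of $\mmax$ has the form $\V(hD)$ with $h^2-h\in\Jac(D)$, and by the Baire category theorem the covering of $\V(qD)$ by the closed sets $\nu_{qD}^{-1}([0,n])$ has a member with nonempty interior). Then, with $T_A=\bigcap_{M\in A}D_M$ and $T_B=\bigcap_{M\in B}D_M$, one has $qD=(qT_A\cap D)\cdot(qT_B\cap D)$ with the two factors comaximal and proper; and if $qT_A\cap D$ were a \emph{principal} ideal $bD$, then $q/b\in D$ and $q=b\cdot(q/b)$ would be a nontrivial factorization of the irreducible $q$, a contradiction. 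The hard part — and the one place where atomicity, rather than $D$ being merely Prüfer, has to be used essentially — is to arrange that one of these comaximal factors is principal rather than only two-generated, i.e.\ to kill the Picard-type obstruction; concretely one would feed the factorizations into irreducibles of suitable elements of $M_0$ back into the equality $qD=(qT_A\cap D)(qT_B\cap D)$. Once the claim is established it contradicts the first paragraph, so the assumption $\Jac(D)\neq(0)$ is untenable and $\Jac(D)=(0)$.
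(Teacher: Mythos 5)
Your reduction steps are mostly sound (the finite-character argument showing some irreducible $q$ lies in infinitely many maximal ideals; the existence of a non-isolated $M_0\in\V(qD)$ with $v_{M_0}(q)=1$; the deduction that $M_0$ is not finitely generated; the comaximal splitting $qD=I_AI_B$ with $I_A$ invertible because $\nu_{qD}$ is bounded on the clopen piece $A$). But the proof has a genuine gap exactly where you flag ``the hard part'': to contradict irreducibility of $q$ you need one of the comaximal factors to be \emph{principal}, and you offer no argument for this --- only the suggestion that one should ``feed the factorizations into irreducibles of suitable elements of $M_0$ back into the equality.'' Invertibility of $I_A$ says nothing about its class in $\Inv(D)$ being trivial, and atomicity gives you factorizations of \emph{elements}, not a way to realize a prescribed ideal-function splitting by a principal ideal; bridging that is precisely the content that is missing, so the contradiction is never reached. (A smaller issue: your auxiliary claim that every clopen subset of $\mmax$ is of the form $\V(hD)$ for a single element $h$ is unjustified --- clopen sets are of the form $\V(I)$ for finitely generated $I$, not principal $I$, in general --- though this part could be repaired by working with $qD+I^n$ instead of $qD+h^nD$. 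Also, your ``claim'' is stated without atomicity but its proof factors $\pi$ into irreducibles, so it should be stated for atomic domains.)

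For comparison, the paper does not attempt a self-contained argument at all: it invokes \cite[Theorem 3.15]{hasenauer-normsets}, which produces from an atomic non-Dedekind almost Dedekind domain an overring $T$ that is atomic and \emph{completely unbounded}, and then observes that $\Jac(D)\neq(0)$ would force $\Jac(T)\neq(0)$, contradicting Corollary \ref{cor:Jac-complunbounded}. All of the combinatorial difficulty you ran into is packaged inside that cited theorem. If you want to keep a self-contained proof, you would essentially have to reprove that result; otherwise the clean route is to cite it and finish in two lines as the paper does.
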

\begin{proof}
By \cite[Theorem 3.15]{hasenauer-normsets}, there is an overring $T$ of $D$ that is atomic and completely unbounded. If $\Jac(D)\neq(0)$, then also $\Jac(T)\neq(0)$, contradicting Corollary \ref{cor:Jac-complunbounded}.
\end{proof}

\section{SP-scatteredness}\label{sect:SP-scattered}
The fact that every almost Dedekind domain has a noncritical maximal ideal has a very powerful consequence for SP-scattered domains.
\begin{teor}\label{teor:SP-scat}
Let $D$ be an almost Dedekind domain. Then, $D$ is SP-scattered.
\end{teor}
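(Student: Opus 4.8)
The plan is to use the recursive construction $\{\inscrit_\alpha(D)\}_\alpha$ recalled in the preliminaries together with Theorem~\ref{teor:exist-noncrit}, showing that the descending chain of closed sets $\inscrit_\alpha(D)$ decreases \emph{strictly} for as long as it stays nonempty. Since this chain is known to stabilize, it will then follow that $\inscrit_\infty(D)=\emptyset$, i.e., that $D$ is SP-scattered. (If $\inscrit_\alpha(D)=\emptyset$ for some $\alpha$ there is nothing left to do, so the only case to handle is when every $\inscrit_\alpha(D)$ is nonempty.)

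The key step will be the claim: if $\inscrit_\alpha(D)\neq\emptyset$, then $\inscrit_{\alpha+1}(D)\subsetneq\inscrit_\alpha(D)$. To prove it I would first note that $T_\alpha=\bigcap\{D_P\mid P\in\inscrit_\alpha(D)\}$ is again an almost Dedekind domain: as an overring of the Pr\"ufer domain $D$ it is Pr\"ufer, and its localization at any maximal ideal $PT_\alpha$ (with $P\in\inscrit_\alpha(D)$) is the discrete valuation ring $D_P$. Since $\inscrit_\alpha(D)\neq\emptyset$, $T_\alpha$ has a nonzero maximal ideal and hence is not a field, so Theorem~\ref{teor:exist-noncrit} applies to $T_\alpha$ and gives $\inscrit(T_\alpha)\neq\Max(T_\alpha)$, that is $\inscrit(T_\alpha)\subsetneq\Max(T_\alpha)$. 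I would then transport this along the bijection $P\mapsto PT_\alpha$ between $\inscrit_\alpha(D)$ and $\Max(T_\alpha)$: under it $\inscrit_{\alpha+1}(D)=\{P\in\Max(D)\mid PT_\alpha\in\inscrit(T_\alpha)\}$ corresponds exactly to $\inscrit(T_\alpha)$, so the strict inclusion above pulls back to $\inscrit_{\alpha+1}(D)\subsetneq\inscrit_\alpha(D)$, proving the claim.

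To finish, I would take the ordinal $\alpha$ at which the chain stabilizes, so that $\inscrit_\alpha(D)=\inscrit_{\alpha+1}(D)=\inscrit_\infty(D)$; if this common set were nonempty, the claim would yield $\inscrit_{\alpha+1}(D)\subsetneq\inscrit_\alpha(D)$, a contradiction. Hence $\inscrit_\infty(D)=\emptyset$, i.e., $D$ is SP-scattered. I do not expect a real obstacle here: the substantive content is entirely carried by Theorem~\ref{teor:exist-noncrit}, and the only points requiring a line of care are that $T_\alpha$ is again an almost Dedekind domain that is not a field (so that Theorem~\ref{teor:exist-noncrit} may legitimately be invoked on it) and that the recursion ``restarted from $T_\alpha$'' reproduces $\inscrit_{\alpha+1}(D)$ — both of which are already built into the construction of the $\inscrit_\alpha$'s.
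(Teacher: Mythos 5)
Your proposal is correct and follows essentially the same route as the paper: both apply Theorem~\ref{teor:exist-noncrit} to the overring $T_\alpha$ at the stabilization ordinal, using that its maximal ideals correspond to $\inscrit_\alpha(D)$ and its critical ideals to $\inscrit_{\alpha+1}(D)$, to conclude that $T_\alpha$ is a field and hence $\inscrit_\infty(D)=\emptyset$. The only cosmetic difference is that you phrase the key step as a strict-descent claim valid for every $\alpha$ with $\inscrit_\alpha(D)\neq\emptyset$, whereas the paper argues by contradiction directly at the SP-rank.
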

\begin{proof}
Let $\alpha$ be an the SP-rank of $D$, and let $T_\alpha:=\bigcap\{D_M\mid M\in\inscrit_\alpha(D)\}$: then, the maximal ideals of $T_\alpha$ are extensions of the members of $\inscrit_\alpha(D)$. Thus, $\inscrit(T_\alpha)=\Max(T_\alpha)$, since otherwise the set $\inscrit_{\alpha+1}(D)$ (which are the restrictions of the critical ideals of $T_\alpha$) would be strictly contained in $\inscrit_\alpha(D)$. By Theorem \ref{teor:exist-noncrit}, it follows that $T_\alpha$ must be a field, i.e., that $D$ is SP-scattered.
\end{proof}

\begin{cor}
Let $D$ be an almost Dedekind domain. Then, $\inscrit_\infty(D)=\critx{n}_\infty(D)=\critx{\omega}_\infty(D)=\emptyset$ for every $n\inN$.
\end{cor}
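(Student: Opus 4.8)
The plan is to notice that $\inscrit_\infty(D)=\emptyset$ is nothing but Theorem \ref{teor:SP-scat} restated, and to obtain the two remaining equalities by rerunning the proof of that theorem with $n$-critical (respectively $\omega$-critical) ideals in place of critical ideals. The only extra ingredient this requires is a ``no anti-$\critx{n}$-domain'' statement: no almost Dedekind domain $T$ that is not a field can satisfy $\critx{n}(T)=\Max(T)$ for some $n\inN$, nor $\critx{\omega}(T)=\Max(T)$. I would get this for free from what is already in hand: by Proposition \ref{prop:inscrit-full} each of these equalities is equivalent to $\inscrit(T)=\Max(T)$, and Theorem \ref{teor:exist-noncrit} rules that out unless $T$ is a field.

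With this in place, fix $n\inN$ and let $\alpha$ be the $n$-SP-rank of $D$, so that $\critx{n}_\alpha(D)=\critx{n}_{\alpha+1}(D)$. Set $\phantom{}^nT_\alpha:=\bigcap\{D_M\mid M\in\critx{n}_\alpha(D)\}$; as recorded in Section \ref{sect:bounded}, this is an overring of $D$, hence again an almost Dedekind domain, whose maximal ideals are exactly the extensions of the members of $\critx{n}_\alpha(D)$. I would then argue as in Theorem \ref{teor:SP-scat}: if $\critx{n}(\phantom{}^nT_\alpha)$ were properly contained in $\Max(\phantom{}^nT_\alpha)$, pulling back along the maximal-space correspondence would give $\critx{n}_{\alpha+1}(D)\subsetneq\critx{n}_\alpha(D)$, contradicting the choice of $\alpha$; hence $\critx{n}(\phantom{}^nT_\alpha)=\Max(\phantom{}^nT_\alpha)$, and by the observation above $\phantom{}^nT_\alpha$ must be a field, i.e.\ $\critx{n}_\alpha(D)=\emptyset$. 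Therefore $\critx{n}_\infty(D)=\emptyset$. The identical argument, now with the $\omega$-SP-rank and the overring $\phantom{}^{\omega}T_\alpha$ and the equivalence $\critx{\omega}(T)=\Max(T)\iff\inscrit(T)=\Max(T)$ from Proposition \ref{prop:inscrit-full}, yields $\critx{\omega}_\infty(D)=\emptyset$.

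I do not expect a real obstacle here: the substantive work is already in Proposition \ref{prop:inscrit-full}, Theorem \ref{teor:exist-noncrit} and Theorem \ref{teor:SP-scat}. The only points to verify rather than assert are (i) that the recursively defined chains $\{\critx{n}_\alpha(D)\}_\alpha$ and $\{\phantom{}^nT_\alpha\}_\alpha$ really do share the formal features of $\{\inscrit_\alpha(D)\}_\alpha$ and $\{T_\alpha\}_\alpha$ used in Theorem \ref{teor:SP-scat} — stabilization of the descending chain, closedness of each term, and the correspondence of maximal spaces — which is precisely what was set up in Section \ref{sect:bounded}, and (ii) that a field has no proper maximal ideals, so that ``$\phantom{}^nT_\alpha$ is a field'' is literally the same assertion as ``$\critx{n}_\alpha(D)=\emptyset$'', exactly as in the proof of Theorem \ref{teor:SP-scat}. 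Combining the three cases gives $\inscrit_\infty(D)=\critx{n}_\infty(D)=\critx{\omega}_\infty(D)=\emptyset$.
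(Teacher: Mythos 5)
Your argument is correct, but it is not the route the paper takes. The paper disposes of the corollary in one line: it asserts the containment $\critx{\omega}_\infty(D)\subseteq\inscrit_\infty(D)$ (and implicitly $\critx{n}_\infty(D)\subseteq\inscrit_\infty(D)$, which follows from the level-by-level containments $\critx{\omega}_\alpha(D)\subseteq\critx{n}_\alpha(D)\subseteq\inscrit_\alpha(D)$) and then quotes Theorem \ref{teor:SP-scat}, which gives $\inscrit_\infty(D)=\emptyset$. You instead rerun the transfinite argument of Theorem \ref{teor:SP-scat} separately for each $n$ and for $\omega$, with Proposition \ref{prop:inscrit-full} supplying the key reduction $\critx{n}(T)=\Max(T)\iff\inscrit(T)=\Max(T)$ at the top of the tower, after which Theorem \ref{teor:exist-noncrit} forces $\phantom{}^nT_\alpha$ to be a field. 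Each approach has a small advantage: the paper's is shorter, but its containment of the stabilized sets is left unproved and does require a (routine) transfinite induction showing that non-criticality transfers from $T_\gamma$ to the larger overring $\phantom{}^nT_\gamma$; your version avoids that entirely, at the cost of having to re-verify that the $n$-critical tower stabilizes and enjoys the maximal-space correspondence --- which is exactly what Section \ref{sect:bounded} already set up, so nothing is missing. Both proofs ultimately rest on the same substantive inputs, namely Theorem \ref{teor:exist-noncrit} and (for you) Proposition \ref{prop:inscrit-full}.
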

\begin{proof}
We have $\critx{\omega}_\infty(D)\subseteq\inscrit_\infty(D)$, and by the previous proposition $\inscrit_\infty(D)=\emptyset$. The claim is proved.
\end{proof}

In particular, all results about SP-scattered domains do actually hold for every almost Dedekind domain; we denote by $\mathcal{C}_c(X,\insZ)$ the group of continuous functions of bounded support on a topological space $X$.
\begin{prop}\label{prop:Invfree}
\cite[Theorem 5.9]{SP-scattered} Let $D$ be an almost Dedekind domain with SP-rank $\alpha$. Then,
\begin{equation*}
\Inv(D)\simeq\bigoplus_{i<\alpha}\mathcal{C}_c(\inscrit_i(D)\setminus\inscrit_{i+1}(D),\insZ);
\end{equation*}
in particular, $\Inv(D)$ is free.
\end{prop}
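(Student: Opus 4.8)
Since Theorem \ref{teor:SP-scat} guarantees that $D$ is SP-scattered, the statement is exactly the SP-scattered case of \cite[Theorem 5.9]{SP-scattered}, so the quickest route is to invoke that theorem; I sketch its argument for completeness. The plan is to use the transfinite construction $\{\inscrit_\alpha(D)\}_\alpha$, $\{T_\alpha\}_\alpha$ attached to critical (as opposed to $n$-critical) ideals. Exactly as in the discussion following Proposition \ref{prop:freekernel}, one assembles for every ordinal $\beta$ a short exact sequence
\begin{equation*}
0\longrightarrow\bigoplus_{i<\beta}\Delta_i\longrightarrow\Inv(D)\longrightarrow\Inv(T_\beta)\longrightarrow 0,
\end{equation*}
where $\Delta_i:=\ker\bigl(\Psi_i\colon\Inv(T_i)\longrightarrow\Inv(T_{i+1})\bigr)$. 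Taking $\beta$ equal to the SP-rank $\alpha$, SP-scatteredness gives $\inscrit_\alpha(D)=\emptyset$, hence $T_\alpha=K$ and $\Inv(T_\alpha)=0$, and the sequence collapses to $\Inv(D)\simeq\bigoplus_{i<\alpha}\Delta_i$.

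It remains to identify $\Delta_i$ with $\mathcal{C}_c(\inscrit_i(D)\setminus\inscrit_{i+1}(D),\insZ)$. Since $\Psi_i$ is the first extension map of the same construction restarted from $T_i$, it suffices to do $i=0$, i.e. to describe $\ker(\Inv(D)\to\Inv(T_1))$. An invertible ideal $I$ lies in this kernel precisely when $\nu_I$ vanishes on $\inscrit(D)$; by \cite[Proposition 3.3]{SP-scattered} such an $I$ can be written $JL^{-1}$ with $J,L$ finitely generated proper ideals whose vanishing loci miss $\inscrit(D)$, so by Proposition \ref{prop:nfact} (case $n=1$, the $1$-bounded ideals being the radical ones) together with the characterization of continuity of $\nu$ via radical factorization in \cite[Proposition 3.10]{SP-scattered}, both $\nu_J$ and $\nu_L$ are continuous, hence so is $\nu_I=\nu_J-\nu_L$; and $\nu_I$ has bounded support. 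Thus $I\mapsto\nu_I$ defines an injective group homomorphism $\Delta_0\hookrightarrow\mathcal{C}_c(\mmax\setminus\inscrit(D),\insZ)$ (a homomorphism because $\nu_{IJ}=\nu_I+\nu_J$, hence $\nu_{I^{-1}}=-\nu_I$). For surjectivity one realizes a prescribed $f$ of bounded support as an ideal function: the support of $f$ meets finitely many of the clopen level sets of $f$, each such clopen set is the vanishing locus of a suitable finitely generated radical ideal (these are built from the finitely generated radical ideals contained in the maximal ideals outside $\inscrit(D)$, which exist by the very definition of ``critical''), and an appropriate product of these ideals has ideal function $f$.

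The main obstacle is precisely this surjectivity step: manufacturing, for each relevant clopen subset of $\mmax\setminus\inscrit(D)$, an honest finitely generated radical ideal cutting it out, and checking that products and intersections of such ideals correspond under $\nu$ to sums and suprema; this is the technical heart of \cite[Section 5]{SP-scattered}. The transfinite splicing of the $\Delta_i$ into the exact sequence above and the passage from $i=0$ to arbitrary $i$ are then formal. Finally, each $\mathcal{C}_c(\inscrit_i(D)\setminus\inscrit_{i+1}(D),\insZ)$ is a subgroup of the group of all bounded $\insZ$-valued functions on that space, hence free by \cite[Satz 1]{nobeling}; since a direct sum of free abelian groups is free, this yields both the displayed isomorphism and the freeness of $\Inv(D)$.
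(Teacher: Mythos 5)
Your proposal is correct and takes the same route as the paper: Proposition \ref{prop:Invfree} is stated there purely as a citation of \cite[Theorem 5.9]{SP-scattered}, made applicable to all almost Dedekind domains by Theorem \ref{teor:SP-scat}, which is exactly your first step. Your sketch of the cited theorem's internal argument (the exact sequence $0\to\bigoplus_{i<\alpha}\Delta_i\to\Inv(D)\to\Inv(T_\alpha)\to 0$, the identification of $\Delta_0$ with $\mathcal{C}_c(\mmax\setminus\inscrit(D),\insZ)$, and freeness via \cite[Satz 1]{nobeling}) is consistent with the discussion following Proposition \ref{prop:freekernel} and correctly flags the surjectivity step as the part genuinely residing in \cite{SP-scattered}.
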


\begin{prop}
{\rm(\cite[Corollary 7.5]{SP-scattered}, \cite[Proposition 7.5]{radical-semistar})}
Let $D$ be an almost Dedekind domain, let $\ell$ be a singular length function on $D$ and let $\tau$ be the corresponding ideal colength. Then, $\ell$ is radical, i.e.,
\begin{equation*}
\tau(I)=\tau(\rad(I))
\end{equation*}
for every ideal $I$.
\end{prop}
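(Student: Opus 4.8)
The plan is to deduce the statement from \cite[Corollary 7.5]{SP-scattered} (see also \cite[Proposition 7.5]{radical-semistar}), which establish the equality $\tau(I)=\tau(\rad(I))$ for SP-scattered almost Dedekind domains: by Theorem \ref{teor:SP-scat}, \emph{every} almost Dedekind domain is SP-scattered, so the hypothesis present in those references is vacuous and the conclusion becomes unconditional. In other words, the single new ingredient needed here is Theorem \ref{teor:SP-scat}; I sketch the remaining bookkeeping only to make clear where that ingredient enters.

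When $I$ is finitely generated the equality is elementary. From the short exact sequence $0\to\rad(I)/I\to D/I\to D/\rad(I)\to 0$ and the additivity of $\ell$ one gets $\tau(I)=\tau(\rad(I))+\ell(\rad(I)/I)$, so it suffices to see that $\ell(\rad(I)/I)=0$; but $\rad(I)/I$ is a torsion $D$-module each of whose finitely generated submodules has finite length, and a singular length function vanishes on finite-length modules, so $\ell(\rad(I)/I)=0$ by upper continuity of $\ell$. The real content is thus the passage to arbitrary ideals $I$, where $\rad(I)/I$ need no longer be controlled so directly.

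For a general $I$ one invokes the structure theory of \cite[Sections 5 and 7]{SP-scattered}: one runs a transfinite induction along the chain of closed sets $\{\inscrit_\alpha(D)\}_\alpha$ and the attached overrings $\{T_\alpha\}_\alpha$ — the same stratification of $\mmax$ underlying Proposition \ref{prop:Invfree} — using that at each successor stage the ideals one must control are locally trivial off $\inscrit_{\alpha+1}(D)$ and hence, over $T_\alpha$, factor as finite products of radical ideals by \cite[Proposition 3.10]{SP-scattered}; additivity of $\ell$ then collapses $\tau(I)$ to $\tau(\rad(I))$ one stratum at a time, and a final limit step (again via upper continuity) reassembles the pieces. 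This induction terminates precisely because the chain $\{\inscrit_\alpha(D)\}_\alpha$ reaches $\emptyset$ — equivalently, because $T_\alpha=K$ at the SP-rank of $D$ — which is the content of Theorem \ref{teor:SP-scat}, itself resting on the existence of a noncritical maximal ideal (Theorem \ref{teor:exist-noncrit}). So there is no remaining obstacle: the one point that had kept this statement from being known for arbitrary almost Dedekind domains is exactly the non-emptiness of $\mmax\setminus\inscrit(D)$, now disposed of.
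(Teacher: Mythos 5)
Your proposal is correct and takes essentially the same route as the paper: the paper gives no independent proof of this proposition, but simply cites the result already established for SP-scattered domains in the references and observes (as stated just before Proposition \ref{prop:Invfree}) that Theorem \ref{teor:SP-scat} makes every almost Dedekind domain SP-scattered, so the hypothesis is automatic. Your additional sketch of the internal mechanism of the cited results is not needed for the deduction and is not reproduced in the paper, but it does not affect the validity of the argument.
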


To conclude, we improve Theorem \ref{teor:exist-noncrit} by showing that the set $\mmax\setminus\inscrit(D)$ is not only nonempty, but also dense in $\mmax$; we need the following definition.

\begin{defin}
Let $D$ be an almost Dedekind domain and $P$ be a maximal ideal. The \emph{SP-height} of $P$ is the ordinal $\alpha$ such that $P\in X_\alpha:=\inscrit_\alpha(D)\setminus\inscrit_{\alpha+1}(D)$; equivalently, the SP-height of $P$ is the maximal ordinal $\alpha$ such that $P\in\inscrit_\alpha(D)$.
\end{defin}
Note that this definition is well-posed, since $\bigcup_\alpha X_\alpha=\mmax\setminus\inscrit_\infty(D)=\emptyset$, by Theorem \ref{teor:SP-scat}.

\begin{teor}\label{teor:dense}
Let $D$ be an almost Dedekind domain. Then, the set $\mmax\setminus\inscrit(D)$ is dense in $\mmax$.
\end{teor}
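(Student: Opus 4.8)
The plan is to show that every nonempty clopen set of $\mmax$ meets $\mmax\setminus\inscrit(D)$; since the clopen sets form a basis, this gives density. So fix a nonzero finitely generated ideal $J$ with $\V(J)\neq\emptyset$, and suppose for contradiction that $\V(J)\subseteq\inscrit(D)$, i.e., every maximal ideal containing $J$ is critical. The key idea is to pass to the overring $T:=\bigcap\{D_M\mid M\in\V(J)\}$, which (as in the proof of Theorem \ref{teor:exist-noncrit}) is an almost Dedekind domain whose maximal space is homeomorphic to $\V(J)$, via $M\mapsto MT$, and which has nonzero Jacobson radical since $JT\subseteq\Jac(T)$. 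I would first check that this homeomorphism is compatible with criticality: a maximal ideal $M\supseteq J$ is critical in $D$ if and only if $MT$ is critical in $T$. One direction is easy (extensions of non-critical ideals stay non-critical); for the other, one uses that $D_M=T_{MT}$ and that a finitely generated radical ideal of $T$ contained in $MT$ contracts, after clearing denominators, to a finitely generated ideal of $D$ contained in $M$ whose radical is $M$-locally the maximal ideal — more carefully, one intersects with $D$ and multiplies by a suitable element of $J$ to land inside $D$. The upshot is that the assumption $\V(J)\subseteq\inscrit(D)$ forces $\inscrit(T)=\Max(T)$, i.e., $T$ is an anti-SP domain with nonzero Jacobson radical.

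But that is exactly the situation ruled out by the first half of the proof of Theorem \ref{teor:exist-noncrit}: combining Proposition \ref{prop:semicont}, Lemma \ref{lemma:dense} and the Baire category theorem applied to the compact Hausdorff space $\mmax_T$, one obtains a maximal ideal of $T$ on which the ideal function of a fixed finitely generated ideal inside $\Jac(T)$ takes infinite value — a contradiction. Hence $\V(J)\not\subseteq\inscrit(D)$, so $\V(J)$ meets $\mmax\setminus\inscrit(D)$, and density follows. Alternatively, and perhaps more cleanly, one can avoid re-deriving anti-SP impossibility and instead invoke Theorem \ref{teor:exist-noncrit} directly once $T$ is known to be an almost Dedekind domain that is not a field (which holds because $\V(J)$ has more than one point unless $J$ is contained in a single maximal ideal $M$, in which case $D_M$ is a DVR and $M$ is trivially non-critical): Theorem \ref{teor:exist-noncrit} then says $\inscrit(T)\neq\Max(T)$, contradicting what we derived. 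I would present this second route, treating the one-point case of $\V(J)$ separately as a triviality.

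The main obstacle is the compatibility of criticality under the localization-at-$\V(J)$ operation, i.e., proving that $M\in\V(J)$ is critical in $D$ iff $MT$ is critical in $T$. The subtlety is purely about moving finitely generated radical ideals back and forth between $D$ and its overring $T$ while staying inside the relevant maximal ideal; this is routine Prüfer-domain bookkeeping (using that finitely generated ideals of $T$ are invertible, that $T$ is a flat overring so $MT\cap D=M$ for $M\in\V(J)$, and that $JT$ being in the Jacobson radical of $T$ keeps everything honest), but it must be spelled out. Once that is in hand, the topological input — Baire category on $\mmax_T$, already packaged inside Theorem \ref{teor:exist-noncrit} — does the rest with no further work.
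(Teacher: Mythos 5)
Your proposal is correct, and it takes a genuinely different route from the paper. The paper proves density by transfinite induction on the SP-height of a maximal ideal $P$: assuming all ideals of smaller SP-height lie in the closure of $Z=\mmax\setminus\inscrit(D)$, it takes a finitely generated $I$ with $IT_\alpha$ radical, perturbs it to $I'=I+J$ using a hypothetical basic neighbourhood $\V(J)$ of $P$ missing $\overline{Z}$, and derives the contradiction that $I'$ is radical in $D$ and contained in $P$. This route presupposes Theorem \ref{teor:SP-scat} (so that SP-height is defined everywhere) and the whole $\{T_\alpha\}$ machinery. Your route is more economical: reduce density to ``every nonempty $\V(J)$ meets $Z$'' (legitimate, since the $\V(J)$ are closed under finite intersections and hence form a basis of $\mmax$), pass to $T=\bigcap\{D_M\mid M\in\V(J)\}$, and apply Theorem \ref{teor:exist-noncrit} to $T$; no induction and no SP-scatteredness is needed. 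The transfer step you flag as the main obstacle is indeed the only thing to check, and the direction you actually need ($MT$ not critical in $T$ implies $M$ not critical in $D$) is best done not by contracting and ``multiplying by an element of $J$'' (which can destroy radicality) but by clearing denominators: write the finitely generated radical ideal of $T$ as $L_0T$ with $L_0$ a finitely generated ideal of $D$, and pass to $L_0+J$. Then $\nu_{L_0+J}=\min(\nu_{L_0},\nu_J)$ vanishes off $\V(J)$ and is $\leq\nu_{L_0T}\leq 1$ on $\V(J)$ (using $D_M=T_{MT}$), while $L_0+J\subseteq M$ because $L_0\subseteq L_0T\cap D\subseteq MT\cap D=M$; so $L_0+J$ is a proper finitely generated radical ideal inside $M$. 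Also, your case distinction when $\V(J)$ is a single point is unnecessary: $T$ fails to be a field as soon as $\V(J)\neq\emptyset$, which is all Theorem \ref{teor:exist-noncrit} requires. What your approach buys is a self-contained, induction-free proof showing that density is already implicit in the localization trick used in the second half of the proof of Theorem \ref{teor:exist-noncrit}; what the paper's approach buys is finer information, namely that membership in $\overline{Z}$ propagates stratum by stratum through the sets $\inscrit_\alpha(D)\setminus\inscrit_{\alpha+1}(D)$.
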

\begin{proof}
Set $Z:=\mmax\setminus\inscrit(D)$. Let $P$ be a prime ideal of $D$; we proceed by induction on the SP-height of $P$.

If the height is $0$, then $P\in Z\subseteq\overline{Z}$. Suppose that $\alpha>0$ (so $P$ is not critical) and that, for every prime ideal $Q$ of SP-height $\beta<\alpha$, we have $Q\in\overline{Z}$; then, $\overline{Z}$ contains $Z_\alpha:=\bigcup_{\beta<\alpha}X_\beta$. Suppose $P$ has SP-height $\alpha$, i.e., $P\in X_\alpha$. By definition, $PT_\alpha$ is not critical in $T_\alpha$, and thus there is a finitely generated ideal $I$ of $D$ such that $IT_\alpha$ is radical in $T_\alpha$ and $IT_\alpha\subseteq PT_\alpha$ (thus, $I\subseteq P$). If $P\notin\overline{Z}$, there is an open subset $\Omega$ of $\mmax$ containing $P$ but disjoint from $\overline{Z}$ and thus from $Z_\alpha$; since the $\V(J)$ (as $J$ ranges among the finitely generated ideals of $D$) are a subbasis of $\mmax$, we can suppose that $\Omega=\V(J)$. Let $I':=I+J$: then, $I'$ is finitely generated, $I'\subseteq P$, and $\V(I')\cap Z_\alpha\subseteq\V(J)\cap Z_\alpha=\emptyset$. Moreover, if $M\notin Z_\alpha$, then $\nu_{I'}(M)\leq\nu_I(M)=\nu_{IT_\alpha}(M)\leq 1$; hence, $I'$ is a radical ideal. This would imply that $P$ is a non-critical ideal of $D$, a contradiction; hence $P$ must belong to $\overline{Z}$. By induction, $\overline{Z}=\mmax$, i.e., $Z$ is dense in $\mmax$.
\end{proof}

\bibliographystyle{plain}
\bibliography{/bib/articoli,/bib/libri,/bib/miei}
\end{document}